\newtheorem{theorem}{Theorem}           
\newtheorem{lemma}{Lemma}               
\newtheorem{example}{Example}
\newtheorem{conj}{Conjecture}
\newtheorem{prop}[theorem]{Proposition}
\newtheorem{defn}{Definition}
\theoremstyle{definition}
\begin{document}

\title{Examples for the Quantum Kippenhahn Theorem}


\author{Ben Lawrence}

\address{\sc Ben Lawrence \\ Department of Mathematics \\ University of Auckland \\
Auckland\\
New Zealand\\
\email{blaw381@aucklanduni.ac.nz}}


\date{31.08.2018}                               

\keywords{Free analysis; non-commutative algebra; linear pencil; double eigenvalues; Kippenhahn conjecture}

\subjclass{Primary 15A22, 15A42; Secondary 46L52, 90C22}

\thanks{This work is part of the author's PhD thesis written under the supervision of Igor Klep. Supported by the Marsden Fund Council of the Royal Society of New Zealand.} 

\begin{abstract}
       Semidefinite programming optimises a linear objective function over a spectrahedron, and is one of the major advances of mathematical optimisation. Spectrahedra are described by linear pencils, which are linear matrix polynomials with hermitian matrix coefficients. Our focus will be on dimension-free linear pencils where the variables are permitted to be hermitian matrices. A major question on linear pencils, and matrix theory in general, is Kippenhahn's Conjecture. The conjecture states that given a linear pencil $xH + yK $ if the hermitian matrices $H$ and $K$ generate the full matrix algebra, then the pencil must have at least one simple eigenvalue for some $x$ and $y$. The conjecture is known to be false, via a single counterexample due to Laffey. A dimension-free version of the conjecture, known as the Quantum Kippenhahn Theorem, has recently been proven true non-constructively. We present a novel family of counterexamples to Kippenhahn's Conjecture, and use this family to construct concrete examples of the Quantum Kippenhahn Theorem.
\end{abstract}

\maketitle



\section{Introduction}\label{intro}

Semidefinite programming is one of the major advances in mathematical optimisation of the last thirty years. It is widely used in quantitative science, in such fields as control theory, computational finance, signal processing, and fluid dynamics, among many other applications \cite{sdp, sdp_aspects}. It involves the optimisation of a linear objective function of several variables with respect to constraints represented by linear matrix inequalities on those variables. In many cases, after translation in the space of variables, a clean way of representing such constraints is with a \emph{linear pencil} and an associated \emph{linear matrix inequality}: \begin{defn}[Linear pencil and linear matrix inequality]\label{defn_pencil_LMI}
A \emph{linear pencil} is an expression of the form \begin{equation}\label{eqn_pencil_def}
L(x_{1},..,x_{n}) =  \sum_{i = 1}^{n} A_{i}x_{i}\end{equation} where the $A_{i}$ are hermitian matrices and the $x_{i} \in \mathbb{R}$ are variables. Linear pencils whose coefficients are hermitian matrices (not necessarily diagonal) give rise to linear matrix inequalities (LMIs). A \emph{linear matrix inequality} (LMI) is an expression of the form \begin{equation}\label{eqn_lmi_def}
I +  L(x_{1},..,x_{n}) \succcurlyeq 0, 
\end{equation} where $L$ is a linear pencil and the relation $\succcurlyeq$ stands for `positive semi-definite', meaning non-negative eigenvalues.
\end{defn}
Linear pencils are 
ubiquitous in matrix theory and numerical analysis
(e.g. the generalized eigenvalue problem), 
and they frequently appear in (real) algebraic geometry (cf. \cite{vinnikov,NPT}). The use of hermitian matrices (as opposed to arbitrary matrices) in the definition ensures convexity, and a full set of real eigenvalues simplify consideration of such inequalities. Inequality \eqref{eqn_lmi_def} defines a feasible region called a \emph{spectrahedron} \cite{free_convex}. Spectrahedra are always convex and semialgebraic.

The boundary of a spectrahedron associated to an linear pencil $L$  satisfies $\mbox{Det}(I + L) = 0. $ This boundary will be generically smooth, but may contain singularities such as sharp corners. The determinant of $L$ vanishes at each of the singularities. A linear function on the space of variables in which the spectrahedron lies will have a global direction of greatest increase and will have flat level sets. This means that the optimisation of a linear function over the spectrahedron will often be found at one of the singularities. To see this, consider a generically smooth convex set, with some sharp extreme points, approaching a flat plane. In many orientations, it will collide first with one of its extreme points. It is therefore of interest to be able to identify singularities algebraically. However, simply looking for vanishing points of the gradient is not always sufficient, as the following example shows.

\begin{example}[Double circle]\label{ex_double_circle}
$$ I + L =  \left( \begin{array}{cccc}
1-x & y & \, & \, \\
y & 1+x & \, & \, \\
\, & \, & 1 -y & x \\
\, & \, & x & 1+y
\end{array} \right)  \succcurlyeq 0. $$   
\end{example}This spectrahedron is a disc. Evaluating the determinant of the LMI will give us $\mbox{Det}( I + L) = (1 - x^{2} - y^{2})^{2}$, and setting this to zero gives us the boundary. However the presence of the square in the determinant means that the gradient vanishes at every point on the boundary, falsely giving the impression of singularities where there are none. However, $L$ is composed of two block diagonal submatrices, each of which determine the same feasible region. By recognizing this and selecting only one of these sub-matrices to define the LMI, we can define the same feasible region and correctly determine that there are no singularities on the boundary. In this instance, we were saved from purely algebraic singularities by $L$ being block diagonal, but will this always be true?

Our work was motivated by the 1951 conjecture of Rudolf Kippenhahn \cite{kip}: 

\begin{conj}[Kippenhahn \cite{kip}] \label{kip}
Let and $H, K$ be hermitian $2n \times 2n$ matrices, and let $f = \det(xH + yK + I) \in \mathbb{R}[x,y]$. Let $\mathcal{A}$ be the algebra generated by $H$ and $K$. If there exists $k \in \mathbb{N}, k \geq 2 $ and a $ g \in \mathbb{C}[x,y]$ such that $ f = g^{k}$ then there is some unitary matrix $U$ such that $U^{*}(xH + yK)U$ is block diagonal, and thus $ \mathcal{A} \neq M_{n}(\mathbb{C})$.
\end{conj}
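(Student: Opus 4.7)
My plan is to reduce the statement to a commutant problem via Burnside's theorem and then attempt to manufacture a non-scalar commutant element from the identity $f = g^{k}$. Since $H$ and $K$ are hermitian, the algebra $\mathcal{A}$ they generate is automatically a unital $\ast$-subalgebra of $M_{2n}(\mathbb{C})$, so by the double commutant theorem $\mathcal{A} = M_{2n}(\mathbb{C})$ if and only if the commutant $\mathcal{A}' = \{M : MH = HM,\ MK = KM\}$ is $\mathbb{C} I$. Given any non-scalar $M \in \mathcal{A}'$, I would replace it by its hermitian part and then by a non-trivial spectral projection $P$; the decomposition $\mathbb{C}^{2n} = \operatorname{range}(P) \oplus \operatorname{range}(I-P)$ then simultaneously block-diagonalizes $H$ and $K$, producing the required unitary $U$. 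Thus the whole conjecture reduces to exhibiting a single non-scalar matrix in the joint commutant.

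To exploit the hypothesis, I would homogenize. Define $F(t,x,y) := \det(tI + xH + yK)$, a homogeneous polynomial of degree $2n$ with $F(1,x,y) = f(x,y)$. The equality $f = g^{k}$ lifts to $F = G^{k}$, where $G$ is homogeneous of degree $2n/k$. Because $xH + yK$ is hermitian for real $(x,y)$, algebraic and geometric multiplicities coincide, and at a generic real $(x_{0},y_{0})$ the matrix $x_{0}H + y_{0}K$ has exactly $2n/k$ distinct eigenvalues $\mu_{1},\dots,\mu_{2n/k}$, each of multiplicity exactly $k$. The corresponding spectral projections $P_{j}(x_{0},y_{0})$ lie in $\mathcal{A}$ via Lagrange interpolation in $x_{0}H + y_{0}K$, and each has rank $k$. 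The strategy is then to turn this ``uniform multiplicity $k$'' into an invariant sub-structure: perhaps by letting $(x,y)$ vary and tracking how the rank-$k$ eigenspaces of $xH + yK$ fit together into a rank-$k$ vector bundle over an open set of $\mathbb{RP}^{1}$, and looking for a proper sub-bundle or a non-trivial endomorphism of this bundle that is constant in $(x,y)$ — any such structure would descend to a non-scalar element of $\mathcal{A}'$.

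The principal obstacle, which I expect to be decisive, is that simultaneous commutation with \emph{both} $H$ and $K$ is dramatically stronger than commutation with any single $x_{0}H + y_{0}K$. For the latter, the commutant contains a full copy of $\bigoplus_{j} M_{k}(\mathbb{C})$; but as $(x,y)$ varies, the $k$-dimensional eigenspaces rotate, and generically this rotation cuts the intersection of the pointwise commutants down to scalars. There is no obvious mechanism — short of assuming the pencil is already reducible — by which the global factorization $F = G^{k}$ forces a parallel structure on these rotating eigenspaces. Given that the introduction already advertises Laffey's counterexample, I anticipate that this is precisely the step that cannot be carried out, and that the whole program collapses here: $F = G^{k}$ is a necessary but not sufficient condition for the eigenspace bundle to admit a pencil-invariant decomposition.
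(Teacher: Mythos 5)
There is no way to complete your program, and the reason is not a missing lemma but the status of the statement itself: this is a \emph{conjecture} which the paper states only in order to disprove. The paper never proves it; instead it recalls Laffey's $8\times 8$ counterexample, the Li--Spitkovsky--Shukla family, and then constructs a new family (Theorem \ref{thm_counter}, Section \ref{sec_counter}) of pairs $H_{(b)} = A_{(b)}^{2}$, $K = A_{(b)}B + BA_{(b)}$ for which every eigenvalue of $xH_{(b)} + yK$ has even multiplicity (Lemma \ref{lemma_eigenpairs}), so that $f = g^{2}$, while $H_{(b)}$ and $K$ nevertheless generate all of $M_{2n}(\mathbb{C})$ by Theorem \ref{thm_construct_simple}. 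The conjecture is known to hold only for small sizes ($2n \le 6$, by Kippenhahn, Shapiro and Buckley), and what survives in general is only the dimension-free version, the Quantum Kippenhahn Theorem (Theorem \ref{thm_quant_kipp_intro}), which is a different statement.

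Within your proposal, the first reduction is sound and standard: since $\mathcal{A}$ is a unital $\ast$-subalgebra of $M_{2n}(\mathbb{C})$, simultaneous block-diagonalizability of $H$ and $K$ by a unitary is equivalent to the joint commutant being larger than $\mathbb{C}I$, so the conjecture is indeed equivalent to producing a non-scalar element of $\mathcal{A}'$ from the hypothesis $f = g^{k}$. The gap is exactly where you predicted it: the pointwise rank-$k$ spectral projections of $x_{0}H + y_{0}K$ give a large commutant at each fixed $(x_{0},y_{0})$, but there is no mechanism by which the global factorization forces these eigenspaces to fit into a pencil-invariant decomposition, and the counterexamples show that in general they do not --- for the paper's family the eigenvalue pairing comes from the skew-symmetric square structure $(xA_{(b)}+yB)^{2}$, not from any reducibility of the pair $(H_{(b)},K)$. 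So your self-diagnosis in the final paragraph is correct: the ``eigenspace bundle'' step cannot be carried out, and no proof of the statement exists because the statement is false for $2n \ge 8$.
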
 
Kippenhahn's conjecture can also be illustrated geometrically. Given a linear pencil $L=I+xH+yK$ as in Conjecture \ref{kip}, its determinant $f=\det L$ gives rise to the affine scheme $\mbox{Spec} \, \mathbb{C}[x,y]/(f)$. If $f = g^{k} $ with $k \geq 2 $ then this scheme is obviously nonreduced - see for example Chapter 5, Section 3.4 of \cite{shaf}. This conjecture is now known to be false. Our objective is to extend the understanding of the counterexamples to this conjecture. Kippenhahn originally gave a more general form of this conjecture where $f$ is permitted to be a product of more than one polynomial, and the matrices need not be of even order. Kippenhahn's conjecture linked the multiplicity of eigenvalues of a certain type of matrix polynomial to the algebra generated by the coefficients of that polynomial. In his paper \cite{kip}, Kippenhahn proved that his conjecture holds for $n \leq 2$. Shapiro extended the validity range of the conjecture in a series of 1982 papers. In the first of these papers \cite{shapiro1} she demonstrated that if $f$ has a linear factor of multiplicity greater than $n/3$, then the conjecture holds. This proves the more general form of the conjecture for the case of $3 \times 3$ matrices. The second paper \cite{shapiro2} shows that if $f = g^{n/2} $ where $g$ is quadratic, then the conjecture holds. This, combined with \cite{shapiro1}, proves the conjecture for matrices of order $4$ and $5$. The final paper \cite{shapiro3} showed that the conjecture holds if $f$ is a power of a cubic factor. This is sufficient to prove the conjecture for order $6$ matrices, that is for $n = 3$ in the form we are interested in, where $f$ is a power of an irreducible polynomial, but not Kippenhahn's original more general form. Buckley recently gave a proof of the same result as a corollary to a more general result about Weierstrass cubics \cite{buckley}.

In his 1983 paper \cite{laffey}, Laffey disproved the Conjecture for $ n = 4$ with a single counterexample. In 1998 Li, Spitkovsky, and Shukla disproved Kippenhahn's more general form of the conjecture for $n = 3$ by constructing a family of counterexamples (the LSS counterexample) of the form $f = \mbox{det}(I + xH + yK) = g^{2}h $, where both $g$ and $h$ are quadratics \cite{li}.  

In \cite{burnside_graphs} we introduced a theorem which allows for the construction of a class of one-parameter families of counterexamples of Kippenhahn's conjecture, for $n \geq 4 $. Here we give a simplified version of that theorem:

\begin{theorem}[Simplified constructibility theorem \cite{burnside_graphs}]\label{thm_construct_simple}
Let $H$ and $K$ be $2n \times 2n$ hermitian matrices over $\mathbb{C}$. Then if, in a basis in which $K$ is diagonal, with any equal diagonal entries being consecutive, \begin{enumerate}
\item $K$ has eigenvalues of multiplicity at most $2$,
\item the $2 \times 2$ blocks of $H$ lying across the top row are all invertible,
\item there exist distinct  $2 \times 2$ blocks $H_{1i}$ and $H_{1j}$ lying in the top row of $H$ so that $H_{1i}H_{1i}^{*}$ and $H_{1j}H_{1j}^{*}$ do not commute,
\end{enumerate} then the algebra generated by $H$ and $K$ is $M_{2n}(\mathbb{C})$. \\
\end{theorem}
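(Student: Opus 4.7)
My plan is to show $\mathcal{A} = M_{2n}(\mathbb{C})$ by generating every matrix unit under the block-tensor decomposition $M_{2n}(\mathbb{C}) \cong M_n(\mathbb{C}) \otimes M_2(\mathbb{C})$, so that $H = \sum_{i,j=1}^{n} E_{ij} \otimes H_{ij}$. The first task is to exhibit the block projector $\tilde{P}_j := E_{jj} \otimes I_2$ as an element of $\mathcal{A}$ for each $j$. Because $K$ is diagonal with equal entries consecutive and multiplicities at most $2$, the two eigenvalues sitting at positions $2j-1$ and $2j$ determine, via Lagrange interpolation, a polynomial $p_j$ with $p_j(K)=\tilde{P}_j$, so $\tilde{P}_j \in \mathbb{C}[K] \subseteq \mathcal{A}$. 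Consequently $\tilde{P}_i H \tilde{P}_j = E_{ij} \otimes H_{ij} \in \mathcal{A}$ for all $i,j$.

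The second step is to fill out the corner $E_{11} \otimes M_2(\mathbb{C})$. Hermiticity of $H$ gives $H_{i1} = H_{1i}^{*}$, whence
\[ (E_{1i} \otimes H_{1i})(E_{i1} \otimes H_{i1}) = E_{11} \otimes H_{1i} H_{1i}^{*} \in \mathcal{A}. \]
Condition (3) supplies indices $i,j$ for which the Hermitian matrices $H_{1i}H_{1i}^{*}$ and $H_{1j}H_{1j}^{*}$ do not commute. Two non-commuting Hermitian matrices $A,B$ in $M_2(\mathbb{C})$ generate the entire $M_2(\mathbb{C})$: the Pauli-type expansion $A = aI + \vec a \cdot \vec\sigma$, $B = bI + \vec b \cdot \vec\sigma$ with $\vec a, \vec b$ $\mathbb{R}$-linearly independent shows that $I, A, B, [A,B]$ span the four-dimensional $M_2(\mathbb{C})$ over $\mathbb{C}$. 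Hence $E_{11} \otimes M_2(\mathbb{C}) \subseteq \mathcal{A}$.

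The third step is to propagate outward. For any $X \in M_2$ and any $j$, $(E_{11} \otimes X)(E_{1j} \otimes H_{1j}) = E_{1j} \otimes XH_{1j} \in \mathcal{A}$, and invertibility of $H_{1j}$ (condition (2)) ensures $XH_{1j}$ ranges over all of $M_2$ as $X$ does, giving $E_{1j} \otimes M_2(\mathbb{C}) \subseteq \mathcal{A}$ for each $j$. Taking adjoints (the algebra is $*$-closed since $H$ and $K$ are Hermitian) yields $E_{j1} \otimes M_2(\mathbb{C}) \subseteq \mathcal{A}$, and the identity $(E_{i1} \otimes M_2)(E_{1j} \otimes M_2) = E_{ij} \otimes M_2$ then delivers every block, so $\mathcal{A} = M_{2n}(\mathbb{C})$.

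The main obstacle I anticipate is the very first step: when a multiplicity-$2$ eigenvalue of $K$ \emph{straddles} two consecutive blocks (occupying positions $2j, 2j+1$ rather than $2j-1, 2j$), no polynomial in $K$ can split these two positions into their respective blocks, so the Lagrange-interpolation construction of $\tilde{P}_j$ breaks down. Handling this case cleanly likely requires either a reordering argument within the straddling eigenspace that preserves hypotheses (2) and (3), or a construction of $\tilde{P}_j$ that uses $H$ in addition to $K$ to separate the straddling positions. Once the block projectors are in hand, the remainder of the proof is routine manipulation.
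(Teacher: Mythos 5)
Your argument is sound, and it follows the same mechanism this paper attributes to the cited source: generate $M_{2}(\mathbb{C})$ in one diagonal block and then disperse it using the invertible top-row blocks. Note that the present paper does not actually prove Theorem \ref{thm_construct_simple} --- it is imported from \cite{burnside_graphs} --- so the only in-paper comparison available is the informal description in the introduction (``generate $M_{2}(\mathbb{C})$ at a certain block-position, to be then dispersed all around the matrix''), which your matrix-unit argument matches. The individual steps check out: $\tilde P_{i} H \tilde P_{j} = E_{ij}\otimes H_{ij}$; two non-commuting hermitian $2\times 2$ matrices do generate $M_{2}(\mathbb{C})$ (your span argument is fine, and since $E_{11}\otimes I_{2}=\tilde P_{1}$ is already in the algebra, unitality causes no trouble); the $*$-closure used to pass from $E_{1j}\otimes M_{2}$ to $E_{j1}\otimes M_{2}$ is legitimate because the generators are hermitian; and invertibility of $H_{1j}$ makes $XH_{1j}$ sweep out all of $M_{2}$. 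Concerning the obstacle you flag: in the intended reading --- and in every use made of the theorem in this paper --- the $2\times 2$ block partition is the one induced by the consecutive equal pairs of $K$ (in the application $K$ is a direct sum of distinct scalar multiples of $I_{2}$), so the spectral projections of $K$ either coincide with or refine the block projectors $E_{jj}\otimes I_{2}$, and your Lagrange interpolation goes through; a simple eigenvalue sharing a block with a different eigenvalue only makes the available projections finer, which helps rather than hurts. The straddling configuration you describe is therefore a looseness of the simplified statement rather than a hole in the argument needed here; under the literal reading your Step 1 would indeed require the extra work you indicate, but nothing in this paper relies on that case.
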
 

\subsection{Main results}

By giving criteria for a pair of matrices $H$ and $K$ to generate the full matrix algebra, this theorem provides an avenue for constructing counterexamples to Kippenhahn's Conjecture. The criteria are quite broad, and allow for various families of counterexamples. We will use Theorem \ref{thm_construct_simple} in Section \ref{sec_counter} to introduce a novel counterexample family with different properties from the family introduced in \cite{burnside_graphs}. Make the following definitions: $$\alpha = \left( \begin{array}{cc}
1 & 0 \\
0 & -1
\end{array} \right), \qquad \beta = \left( \begin{array}{cc}
0 & 1 \\
1 & 0
\end{array} \right), \qquad U = \left( \begin{array}{cc}
0 & -1 \\
1 & 0
\end{array} \right).$$ The key properties of these $2 \times 2$ matrices are as follows: \begin{align*}
\alpha^{2} = \beta^{2} &= -U^{2} = I_{2}, \\
\alpha U + U \alpha = &0 = \beta U + U \beta,\\
\alpha \beta - \beta \alpha &= \left( \begin{array}{cc}
0 & 2 \\
-2 & 0 
\end{array} \right). 
\end{align*} Now let $n \geq 4$ be an integer, and define $q = \frac{n^{2}}{2} - 1 $. From the above matrices, build the following $2n \times 2n$ skew symmetric matrices, where unspecified entries are zero: \begin{equation}\label{eqn_counter_A}
A_{(b)} = {\small \left( \begin{array}{ccccccc}
q U & \alpha & \alpha & \alpha  & \alpha & \hdots & \alpha\\
-\alpha & (q+1)U & b \alpha + \beta & b \alpha \\
-\alpha & -b\alpha - \beta & (q+2)U & 0  \\
-\alpha  & -b \alpha & 0 & (q+3)U   \\
-\alpha & \, & \, & \, & (q+4)U \\
\vdots & \, & \, & \,  & \, & \ddots \\
-\alpha & \, & \, & \,  & \, & \, & (q+n-1)U\\
\end{array}\right) },
\end{equation}
 \begin{equation}\label{eqn_counter_B}
 B = {\small \left( \begin{array}{cccc}
U  \\
\, & U  \\
\, & \, & \ddots  \\
\, & \, & \, & U
\end{array} \right) },
 \end{equation}   where $b \in \mathbb{R}, b \neq 0$. We then define \begin{equation}
H_{(b)} = A_{(b)}^{2} , \qquad K = A_{(b)}B + BA_{(b)},\end{equation} giving rise to the linear pencil \begin{equation}\label{ex_family_1}
L_{(b)}(x,y) = x H_{(b)} + y K,
\end{equation} where $x$ and $y$ are real and we have indicated dependence on the parameter $b$ using a subscript. Note that $B^{2} = -I$, and consider the expression $$xA_{(b)} + yB.$$ Since this is an even-order skew symmetric matrix for all values of $x$ and $y$, its eigenvalues will come in imaginary conjugate pairs. Squaring the expression we have \begin{align*}
(xA_{(b)} + yB)^{2} &= x^{2}(A_{(b)})^{2} + xy(A_{(b)}B + BA_{(b)}) + y^{2}B^{2} \\
&= x^{2}H_{(b)} + xy K - y^{2}I,
\end{align*} which will have eigenvalues which are the squares of the eigenvalues of $A_{(b)} + y B$. Since $A_{(b)} + y B$ has imaginary conjugate pairs of eigenvalues, $x^{2}H_{(b)} + xy K - y^{2}I$ must have real negative eigenvalues of even multiplicity. Since adding a multiple of the identity only shifts eigenvalues by a constant, $x^{2}H_{(b)} + xy K$ must also have eigenvalues of even multiplicity, and with a shift of variables we can say the same for $xH_{(b)} + y K$. This claim is rigorously proven as Lemma \ref{lemma_eigenpairs} in Section \ref{sec_counter}. Care needs to be taken for the case where $y = 0$.  Evaluating $K$, we have \begin{equation}\label{eqn_K}
K = {\small \left( \begin{array}{cccc}
-2qI_{2} & \\
\, & -2(q+1)I_{2} \\
\, & \, & \ddots \\
\, & \, & \, -2(q+n-1)I_{2}
\end{array} \right) }.\end{equation} This structure emerges from the properties of $\alpha$ and $\beta$ and the way in which they cancel out in the anti-commutator of $A_{(b)}$ and $B$.  Notice how the non-zero $2 \times 2$ blocks of $K$ are simply the identity times the coefficients of the diagonal blocks of $A_{(b)}$, with a factor of $-2$. This ties in with the first condition of Theorem \ref{thm_construct_simple}. In Section \ref{sec_counter}, we will show that $H_{(b)}$ and $K$ as specified satisfy the requirements of Theorem \ref{thm_construct_simple}, and therefore span the full matrix algebra and violate Kippenhahn's conjecture. We summarise these results into the following theorem:

\begin{theorem}[New Counterexample Family]\label{thm_counter}
Let $b \neq 0$ be a real number, and let $n \geq 4$ be an integer. Then let $2n \times 2n$ skew-symmetric matrices $A_{(b)}$ and $B$ be as defined in Equations \eqref{eqn_counter_A} and \eqref{eqn_counter_B}. Let $H_{(b)} = (A_{(b)})^{2}$ and $K = A_{(b)}B + BA_{(b)}$. Then the linear pencil $$L = x H_{(b)} + y K, $$ where $x, y \in \mathbb{R}$, has the following properties: \begin{enumerate}
\item All of the eigenvalues of $L_{(b)}$ have even multiplicity,
\item $H_{(b)}$ and $K$ generate the full matrix algebra $M_{2n}(\mathbb{C})$,
\end{enumerate} for all $b \neq 0$ and for all $x, y \in \mathbb{R}$. Therefore $H_{(b)}$ and $K$ violate Kippenhahn's Conjecture.
\end{theorem}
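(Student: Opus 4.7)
I would establish the two parts separately. For part (1), I exploit the skew-symmetric structure: since $xA_{(b)} + yB$ is a real skew-symmetric matrix of even order $2n$, its eigenvalues form purely imaginary complex-conjugate pairs $\{\pm i\lambda_k\}_{k=1}^{n}$. Squaring yields
\[
(xA_{(b)} + yB)^{2} = x^{2} H_{(b)} + xy\, K - y^{2} I,
\]
whose eigenvalues are $\{-\lambda_k^{2}\}_{k=1}^{n}$, each appearing with even multiplicity by the conjugate pairing. A scalar shift preserves multiplicities, so $x^{2} H_{(b)} + xy\, K$ has even-multiplicity spectrum for all $x,y\in\mathbb{R}$. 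Given arbitrary $(x',y')\in\mathbb{R}^{2}$ with $x'>0$, the substitution $x=\sqrt{x'}$, $y=y'/\sqrt{x'}$ realises $(x',y')=(x^{2},xy)$; the case $x'<0$ follows by negating the pencil; and $x'=0$ is handled directly from the diagonal form \eqref{eqn_K} of $K$, in which every eigenvalue already has multiplicity exactly $2$.

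For part (2), I apply Theorem \ref{thm_construct_simple}. Condition (1) is immediate from \eqref{eqn_K}: in the standard basis $K$ is diagonal with eigenvalues of multiplicity exactly $2$, arranged consecutively, since $q,q+1,\ldots,q+n-1$ are distinct. For conditions (2) and (3) I must compute the top-row $2\times 2$ blocks $(H_{(b)})_{1j}$ of $H_{(b)} = A_{(b)}^{2}$. Multiplying the top row $(qU,\alpha,\alpha,\ldots,\alpha)$ of $A_{(b)}$ by column $j$ and simplifying via $\alpha^{2}=\beta^{2}=-U^{2}=I$, $U\alpha=-\alpha U$, $U\beta=-\beta U$, and $\alpha\beta=-U$, I expect that for $j\geq 5$ only the top-row and the diagonal entries contribute, giving $(H_{(b)})_{1j}=(j-1)\alpha U = -(j-1)\beta$, which is invertible. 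The blocks $(H_{(b)})_{12},(H_{(b)})_{13},(H_{(b)})_{14}$ pick up extra contributions from the special entries $b\alpha+\beta$ and $b\alpha$ in the second row, yielding explicit $2\times 2$ matrices with determinants that are polynomials in $b$, nonzero for $b\ne 0$ (excluding at worst a finite exceptional set).

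For condition (3), I would focus on $(H_{(b)})_{13}$ and $(H_{(b)})_{14}$, both of which depend on $b$ in an essential, structurally different way. Forming the hermitian matrices $(H_{(b)})_{13}(H_{(b)})_{13}^{*}$ and $(H_{(b)})_{14}(H_{(b)})_{14}^{*}$ and computing their commutator directly, I expect to obtain a skew-symmetric $2\times 2$ matrix whose off-diagonal entries are proportional to $b$ and hence nonzero for $b\ne 0$. Theorem \ref{thm_construct_simple} then yields that $H_{(b)}$ and $K$ generate $M_{2n}(\mathbb{C})$, which combined with part (1) gives the claimed violation of Kippenhahn's Conjecture. The main obstacle is the block-algebra bookkeeping: the identities among $\alpha,\beta,U$ and the scalar $b$ must be tracked carefully to obtain the exact forms of the top-row blocks of $H_{(b)}$, to confirm their invertibility, and to exhibit the explicit pair whose $MM^{*}$ products fail to commute.
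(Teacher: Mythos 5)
Your part (1) is sound and is essentially the paper's own argument (Lemma \ref{lemma_eigenpairs}): square the skew-symmetric pencil $xA_{(b)}+yB$ to get $x^{2}H_{(b)}+xyK-y^{2}I$, use the conjugate-pairing of eigenvalues, undo the scalar shift, and recover arbitrary $(x',y')$ by a substitution (the paper divides by $x_{0}$ rather than substituting $x=\sqrt{x'}$, and handles $x'=0$ by continuity rather than by the explicit form \eqref{eqn_K}; these are cosmetic differences).

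The gap is in part (2), in your verification of Condition 2 of Theorem \ref{thm_construct_simple}. Your block computation for $j\geq 5$ is correct, $(H_{(b)})_{1j}=-(j-1)\beta$, but your expectation that the remaining top-row blocks are invertible for all $b\neq 0$ ``excluding at worst a finite exceptional set'' is exactly where the claim escapes you: the theorem asserts the conclusion for \emph{every} $b\neq 0$, and the exceptional set is in fact nonempty. Explicitly, $(H_{(b)})_{13}=\left(\begin{array}{cc} b & -1\\ -3 & b\end{array}\right)$ and $(H_{(b)})_{14}=\left(\begin{array}{cc} b & -3\\ -3 & b\end{array}\right)$ have determinants $b^{2}-3$ and $b^{2}-9$, which vanish at $b=\pm\sqrt{3}$ and $b=\pm 3$, so the top row alone does not satisfy Condition 2 at those four parameter values and Theorem \ref{thm_construct_simple} cannot be invoked there. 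The paper closes this by also computing the \emph{second} block row of $H_{(b)}$, whose blocks are invertible for all $b\neq 0$ except $b=\pm\tfrac12$, and then, for whichever of the two rows is fully invertible at the given $b$, performing a simultaneous symmetric block permutation of $H_{(b)}$ and $K$ to move that row to the top; this permutation changes neither the algebra generated nor the required structure of $K$ (its diagonal $2\times2$ blocks are scalar, so equal entries stay consecutive). Correspondingly, Condition 3 must be checked for \emph{both} rows, not just the top one: the paper verifies that the relevant commutators are $\left(\begin{array}{cc}0 & 48b\\ -48b & 0\end{array}\right)$ for the top row and $\left(\begin{array}{cc}0 & -16b\\ 16b & 0\end{array}\right)$ for the second row, both nonzero for $b\neq 0$. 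Without this two-row/permutation device (or some other argument covering $b=\pm\sqrt{3},\pm 3$), your proposal proves the theorem only off a finite set of parameters, which is strictly weaker than the stated result.
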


The first key property to note is that the diagonal entries of $A_{(b)}$ are all different multiples of $U$. This ensures that $K$ has eigenvalues of multiplicity no more than two. The coefficients based on the constant $q$ have a double purpose, first purpose being to establish the proper structure of $K$, and second purpose to control the eigenvalues of $H_{(b)}$ and $K$ in a specific way. The precise meaning of this statement will be expanded upon in the latter part of this introduction, where we discuss quantisation.

The second key property of this family of counterexamples is presence of non-commuting blocks. In both cases, $A_{(b)}$, from which $H_{(b)}$ is built, is filled almost entirely off the diagonal with $\alpha$, except for $\beta$ which is placed at a specific location. To get an impression of what is happening here, consider the upper-left $8 \times 8$ block of $A_{(b)}$: $$ {\small \left( \begin{array}{cccc}
q U & \alpha & \alpha & \alpha \\
-\alpha & (q+1)U & b \alpha + \beta & b \alpha \\
-\alpha & -b\alpha - \beta & (q+2)U & 0  \\
-\alpha  & -b \alpha & 0 & (q+3)U 
\end{array}\right) }.$$ When $A_{(b)}$ is squared to give $H$, observe what happens when the second row of blocks of $A_{(b)}$ is multiplied by the third column and fourth column respectively. In the first instance, the $b \alpha$ term in the top row lines up with a zero and vanishes, and in the second instance it is an $b \alpha + \beta$ term which vanishes. Simplify by using $\alpha^{2} = I_{2}$, and we are left with non-commuting blocks in the second row of $H_{(b)}$. After symmetric permutation of rows and columns, all of the conditions for Theorem \ref{thm_construct_simple} are met. Intuitively, this $ 8 \times 8$ block contains a minimal amount of structure necessary to create the conditions for Theorem \ref{thm_construct_simple} to operate. Other such minimal counterexamples can be constructed \cite{burnside_graphs}. The single $b \alpha + \beta$ block is there to inject enough non-commutativity to enable $M_{2}(\mathbb{C})$ to be generated at a certain block-position, to be then dispersed all around the matrix. A little inspection will show that trying to perform this same construction with a $6 \times 6$ block will not work; there will not be enough room.

\subsection{Quantisation}

Many physical situations can be modelled by an LMI with the real variables replaced with non-commutative variables. Most often the variables of interest are hermitian matrices, due to their mathematical richness and physical relevance. The term \emph{free analysis} is used for the study of such LMIs; it refers to `dimension-free' since the dimension of the variables as hermitian matrices is usually left unspecified. Free analysis is widely used in operator algebra, mathematical physics, and quantum information theory \cite{operator_algebras}. See \cite[Chapter~8]{free_convex} for a survey of this material. Further information about free LMIs may be found in \cite{matricial_relaxation}. The process of replacing commutative variables with non-commutative variables is referred to as \emph{quantisation} \cite{matricial_relaxation}. 


In Section \ref{sec_quantise} we investigate the application of quantisation to the Kippenhahn conjecture. Applied to a linear pencil such as in the Kippenhahn conjecture, a linear pencil of the form $L = xH + yK$ gives rise to $$L(X,Y) = X \otimes H + Y \otimes K, $$ where $X$ and $Y$ are hermitian matrices. In this dimension-free setting Kippenhahn's conjecture has been proven to hold true as Corollary 5.6 of \cite{kv}:

\begin{theorem}[Quantum Kippenhahn Theorem]\label{thm_quant_kipp_intro}
If $A_{1},...,A_{g}$ generate $M_{d}(\mathbb{C}) $ as an $\mathbb{C}$-algebra, then there exist $n \in \mathbb{N}$ and $X_{1},...,X_{g} \in M_{n}(\mathbb{C}) $ such that $$\mbox{\rm Dim Ker} \left( I_{nd} + \sum_{i=1}^{g} X_{i} \otimes A_{i} \right) = 1. $$
\end{theorem}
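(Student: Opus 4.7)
The plan is to reformulate the kernel condition as a linear-algebraic identity in $M_{nd}(\mathbb{C})$, to realise that identity easily using the generating hypothesis at the level of words, and then to repackage the solution as a genuine linear pencil in the $A_i$ via the Schur-complement linearisation standard in noncommutative rational function theory.

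First I would pick a concrete target. Setting $P := I_{nd} + \sum_i X_i \otimes A_i$, the condition $\dim\ker P = 1$ is equivalent to $P$ having rank $nd-1$, and a convenient such $P$ is the rank-$(nd-1)$ projector $I_{nd} - e_1 e_1^T$ onto $e_1^\perp$, whose kernel is $\mathbb{C}e_1$. Constructing this $P$ means producing $X_i \in M_n(\mathbb{C})$ with
\begin{equation*}
\sum_{i=1}^{g} X_i \otimes A_i \;=\; -\,e_1 e_1^T \;\in\; M_n(\mathbb{C})\otimes M_d(\mathbb{C}).
\end{equation*}

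Second, I would solve an easier polynomial version of this identity. Because $A_1,\ldots,A_g$ generate $M_d(\mathbb{C})$ as an algebra, there exist words $w_1(A),\ldots,w_{d^2}(A)$ in the $A_i$ forming a $\mathbb{C}$-basis of $M_d(\mathbb{C})$. Any element of $M_n(\mathbb{C})\otimes M_d(\mathbb{C})$ then decomposes uniquely as $\sum_\alpha Y_\alpha\otimes w_\alpha(A)$ with $Y_\alpha\in M_n(\mathbb{C})$; applied to the target $-e_1 e_1^T$ this yields coefficients $Y_\alpha$ such that $I + \sum_\alpha Y_\alpha\otimes w_\alpha(A) = I - e_1 e_1^T$ already has one-dimensional kernel spanned by $e_1$. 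The only mismatch with the required form of the theorem is that the $w_\alpha(A)$ are \emph{words} in the $A_i$ rather than the $A_i$ themselves.

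Finally, I would linearise. The classical Schur-complement realisation furnishes, for any polynomial pencil of the above form, an enlarged genuine linear pencil $\tilde{P}=I_{N}+\sum_{i=1}^{g}X_i\otimes A_i$ whose Schur complement against an invertible auxiliary block reproduces the polynomial pencil: each length-two word $A_iA_j$ is encoded by a $2\times 2$ block cell $\bigl(\begin{smallmatrix} I & Y\otimes A_i \\ -I\otimes A_j & I\end{smallmatrix}\bigr)$ whose Schur complement is $I+Y\otimes A_iA_j$, and longer words are handled by iterating this device. Since Schur complementation preserves kernel dimension, $\dim\ker \tilde{P}=1$ as required. The main obstacle is precisely this last step: one must arrange the realisation so that the auxiliary block remains invertible at the chosen coefficients and no spurious kernel vectors enter from the enlargement. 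This bookkeeping is routine in realisation theory but is what forces the (unspecified and potentially large) growth of $n$ allowed by the theorem.
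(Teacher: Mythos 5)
You should first be aware that the paper contains no proof of Theorem \ref{thm_quant_kipp_intro}: it is quoted verbatim from Corollary 5.6 of \cite{kv}, and the author explicitly remarks that the argument there is non-constructive. So your proposal can only be judged against the literal statement, and for that statement (arbitrary $X_i\in M_n(\mathbb{C})$, no hermitian requirement) your route is essentially sound, and in fact more constructive than the cited proof. Two points of bookkeeping deserve explicit mention. First, choose your spanning words to have length at least one: the span of such words is a nonzero two-sided ideal of the simple algebra $M_d(\mathbb{C})$, hence all of it, and this matters because a contribution from the empty word would add a constant term that cannot be absorbed into a \emph{monic} pencil $I+\sum_i X_i\otimes A_i$. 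Second, in the iterated linearisation you should arrange the pivot block to be unipotent (identity plus strictly triangular linear terms), which makes it invertible at \emph{every} evaluation; then the factorisation $\left(\begin{smallmatrix} S & B \\ C & D \end{smallmatrix}\right)=\left(\begin{smallmatrix} I & BD^{-1} \\ 0 & I \end{smallmatrix}\right)\left(\begin{smallmatrix} S-BD^{-1}C & 0 \\ 0 & D \end{smallmatrix}\right)\left(\begin{smallmatrix} I & 0 \\ D^{-1}C & I \end{smallmatrix}\right)$ shows the nullity of the enlarged pencil equals that of the Schur complement, so "no spurious kernel vectors" is automatic rather than something to be arranged. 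With those details filled in, scalar coefficients already suffice in the decomposition $-e_1e_1^{T}=\sum_\alpha c_\alpha w_\alpha(A)$, and the resulting pencil is monic of the required form after the obvious reshuffling of tensor factors.

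The substantive caveat is that the way this theorem is used in the rest of the paper (and in the Kippenhahn context generally) is for \emph{hermitian} pencils: the point of Theorem \ref{thm_quant} and of Sections \ref{sec_LSS}--\ref{sec_laffey} is that hermitian $X,Y$ split the eigenvalue multiplicities of $X\otimes H+Y\otimes K$, and the introduction paraphrases Corollary 5.6 of \cite{kv} in exactly that spirit. Your Schur-complement linearisation intrinsically produces non-hermitian, essentially triangular coefficients $X_i$ (the couplings $Y\otimes A_i$ and $-I\otimes A_j$ sit on one side of the diagonal only), so it establishes the statement as literally printed but not the hermitian refinement the applications rely on; closing that gap is where the real content of \cite{kv} lies, and it is not reachable by this construction without a genuinely new idea (e.g.\ symmetrising the realisation without destroying the one-dimensionality of the kernel).
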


 The implication of this result is that given any counterexample $H$ and $K$ to Kippenhahn's conjecture in the commutative setting, some hermitian matrices $X$ and $Y$ exist in the free setting for which $H$ and $K$ are no longer a counterexample, that is, $X \otimes H + Y \otimes K $ does not have a square characteristic polynomial. The proof of the theorem is not constructive in the sense that it proves that $X$ and $Y$ must exist such that the Kippenhahn conjecture holds true, but does not identify such $X$ and $Y$. We have been able to improve on this situation. For all of the known counterexamples to Kippenhahn's Conjecture, including our own, we have explicitly identified $2 \times 2 $ hermitian matrices $X$ and $Y$ which successfully quantise the counterexamples and restore Kippenhahn's conjecture in a dimension-free setting.

\begin{theorem}[Counterexample Quantisation]\label{thm_quant}
Let $H_{(b)}$ and $K$ be as in Theorem \ref{thm_counter}, to any order $n$, and let $$X = \left( \begin{array}{cc}
1 & 0 \\
0 & 0
\end{array} \right), \qquad Y = \left( \begin{array}{cc}
0 & 1/q^{2} \\
 1/q^{2} & 1
\end{array} \right), $$ where $q = \frac{(n+2)(n-2)}{2} + 1$. Then $X \otimes H_{(b)} +  Y \otimes K $ has at least 6 simple eigenvalues at almost all parameter values $b$ and thus tightens the Quantum Kippenhahn Theorem with respect to a particular family of counterexamples.
\end{theorem}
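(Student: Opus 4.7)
Since $X = \mathrm{diag}(1,0)$ and $Y$ has off-diagonal entry $1/q^{2}$, unit $(2,2)$-entry, and zero $(1,1)$-entry, the matrix $M_{(b)} := X \otimes H_{(b)} + Y \otimes K$ takes the explicit $2 \times 2$ block form
\[
M_{(b)} \;=\; \begin{pmatrix} H_{(b)} & K/q^{2} \\ K/q^{2} & K \end{pmatrix}.
\]
The plan is to analyse its characteristic polynomial directly. Since $K$ is invertible (its eigenvalues $\kappa_{j} := -2(q+j-1)$ are all nonzero), a Schur complement relative to the lower-right block gives the polynomial identity
\[
\det(\lambda I - M_{(b)}) \;=\; \det(\lambda I - K)\,\det\!\Bigl(\lambda I - H_{(b)} - \tfrac{1}{q^{4}}K^{2}(\lambda I - K)^{-1}\Bigr).
\]
Because $K$ is scalar on each $2\times 2$ block, the correction term is itself block-scalar, acting on the $j$th block as multiplication by $\phi_{j}(\lambda) := \kappa_{j}^{\,2}/\bigl(q^{4}(\lambda - \kappa_{j})\bigr)$.

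The key observation is that although $\det(\mu I - H_{(b)})$ is a perfect square in $\mu$ by Theorem~\ref{thm_counter}, the determinant on the right-hand side of the Schur identity is obtained from $\det(\mu I - H_{(b)})$ by the \emph{block-dependent} substitution $\mu \mapsto \lambda - \phi_{j}(\lambda)$. Because the $\phi_{j}$'s are pairwise distinct rational functions (the $\kappa_{j}$'s being distinct), this substitution breaks the square structure and produces a characteristic polynomial in $\lambda$ that is genuinely non-square. Conceptually this is the mechanism by which quantisation lifts the even-multiplicity degeneracy, and the factor $1/q^{2}$ in $Y$ has been chosen to keep $\phi_{j}$ small enough not to dominate the $b$-dependent part of $H_{(b)}$.

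To extract six specific simple eigenvalues, I would localise attention to the top-left $8 \times 8$ block of $A_{(b)}$, which is where the parameter $b$ enters (the remaining off-diagonal blocks being all $\alpha$'s). A direct symbolic computation in this block, starting with $n = 4$, produces a low-degree factor of $\det(\lambda I - M_{(b)})$ whose six roots are generically simple; this can be certified by showing that its discriminant in $\lambda$, which is a polynomial in $b$, is non-vanishing at one convenient value $b_{0}$ (say $b_{0}=1$), from which simplicity for all but finitely many $b$ follows. For $n > 4$ the additional diagonal blocks of $A_{(b)}$ and their all-$\alpha$ couplings contribute, after squaring and the Schur reduction, an explicitly computable summand whose eigenvalues separate cleanly from the six distinguished ones thanks to the precise choice $q = (n+2)(n-2)/2 + 1$.

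The main obstacle is the bookkeeping that arises with general $n$: one must ensure simultaneously that the six distinguished simple eigenvalues remain simple as $n$ grows, and that none of the additional eigenvalues produced by the larger-$n$ blocks collide with them to restore a double multiplicity. This is precisely why the statement hedges with ``at least 6'' and ``almost all $b$'': genericity in $b$ is the natural regime in which accidental coincidences between the distinguished spectrum and the new $\kappa_{j}$'s can be ruled out by a single discriminant non-vanishing check.
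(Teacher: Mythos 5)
Your Schur-complement setup is fine as algebra, but the argument stops being a proof exactly where the theorem gets hard: establishing, uniformly in $n\geq 4$, that at some concrete parameter value the pencil really has at least six simple eigenvalues. Two specific steps would fail or are missing. First, the claim that the block-dependent substitution $\mu \mapsto \lambda - \phi_{j}(\lambda)$ ``breaks the square structure'' is only a heuristic; nothing in the proposal shows that the resulting polynomial has any simple roots, let alone six. Second, the plan to extract a ``low-degree factor of $\det(\lambda I - M_{(b)})$'' from the top-left $8\times 8$ block of $A_{(b)}$ cannot work as stated: the first block-row of $A_{(b)}$ is filled with $\alpha$'s coupling the first block to \emph{every} other block, so $H_{(b)}$ (and hence $M_{(b)}$) has no invariant subspace supported on that corner and its characteristic polynomial has no such factor. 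For $n>4$ you assert that the extra blocks ``separate cleanly thanks to the choice of $q$,'' but that separation is precisely the quantitative content that must be proved, not assumed; deferring $n=4$ to an unspecified symbolic computation and $n>4$ to this unproven separation leaves the theorem unestablished for every $n$.

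For comparison, the paper's route is to set $b=0$ (legitimate even though $b=0$ is excluded from the counterexample family, because eigenvalues are analytic in $b$): there $A_{(0)}$ permutes into $\left(\begin{smallmatrix}0 & A_{1}\\ -A_{1}^{T} & 0\end{smallmatrix}\right)$, so $H_{(0)}=\mathrm{diag}(H_{1},H_{2})$ and $L_{(0)}$ splits as $L_{1}\oplus L_{2}$. Gershgorin's theorem, using the specific value $q=\tfrac{n^{2}}{2}-1$ to make the diagonal gaps dominate the row sums for all $n\geq 4$, shows each $L_{i}$ has only simple eigenvalues. Then a comparison of elementary symmetric polynomials shows $e_{1},e_{2},e_{3}$ of $L_{1}$ and $L_{2}$ agree while $e_{4}(L_{1})-e_{4}(L_{2})=-32/q^{4}\neq 0$, so their spectra differ; a short counting argument using $e_{1}=e_{1}$ and $e_{2}=e_{2}$ forces at least three differing eigenvalues on each side, hence at least six simple eigenvalues of $L_{(0)}$. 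Finally analytic perturbation theory (Kato/Knopp) extends this to all but countably many $b$. If you want to salvage your approach, you would need to replace the $8\times 8$ localisation with an argument valid at a single explicit $b$ for every $n$ (the paper's $b=0$ degeneration plus Gershgorin and the symmetric-polynomial computation is exactly such an argument), and only then invoke your discriminant/genericity step in $b$.
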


Theorem \ref{thm_quant} is the main result of Section \ref{sec_quantise}, and will be proven there. We finish this paper in Sections \ref{sec_LSS} and \ref{sec_laffey} with a short discussion of the quantisation of Laffey's counterexample and Li, Spitkovsky, and Shukla's counterexample to the more general form of the conjecture. With these results, the theorems of Klep and Vol\v ci\v c in \cite{kv} are given concrete expression.

\section{A novel counterexample to Kippenhahn's Conjecture}\label{sec_counter}

In this section we will construct a novel family of counterexamples to Kippenhahn's Conjecture as described in Theorem \ref{thm_counter}.  This family will be similar in structure to that constructed in \cite{burnside_graphs}, but will have different values along the diagonal, and a slightly different off-diagonal structure. Theorem \ref{thm_construct_simple} allows a fair amount of flexibility in the structure of the counterexamples it generates, and we have chosen the following structure so as to produce nice eigenvalue behaviour. The specifics of this will become clear in Section \ref{sec_quantise}. First we will state and prove the double eigenvalue property referred to in the Introduction:

\begin{lemma}[Double eigenvalue lemma]\label{lemma_eigenpairs}
Let $A$ and $B$ be real skew-symmetric matrices of order $2n$, and let $B^{2} = -I$. Define $H = A^{2}$ and $K = AB + BA$. Then, for all $x, y \in \mathbb{R}$, $xH + yK$ has eigenvalues only of even multiplicity. 
\end{lemma}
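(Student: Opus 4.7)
The plan is to leverage the identity previewed in the introduction: setting $M := sA + tB$ with $s,t \in \mathbb{R}$, the matrix $M$ is real skew-symmetric of even order $2n$, and a direct expansion using $H = A^2$, $K = AB + BA$, and $B^2 = -I$ gives
\begin{equation*}
M^2 \;=\; s^2 H + st\, K - t^2 I.
\end{equation*}
So whenever $(s,t) \in \mathbb{R}^2$ can be chosen to satisfy $s^2 = x$ and $st = y$, the matrix $xH + yK$ is a scalar shift of $M^2$, and therefore inherits its eigenvalue multiplicity structure.

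The first step is to show that $M^2$ always has eigenvalue multiplicities that are even. Being real skew-symmetric, $M$ is normal and hence diagonalizable with purely imaginary spectrum; complex conjugation pairs each non-zero eigenvalue $i\mu$ of $M$ with $-i\mu$ of the same multiplicity, and both square to the common value $-\mu^2$, producing an even multiplicity in $M^2$. For the zero eigenvalue, the rank of a real skew-symmetric matrix is always even, so in even order $2n$ the nullity is even as well. Distinct $\mu_j > 0$ give distinct values $-\mu_j^2$, so squaring does not merge eigenvalues, and the evenness persists.

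The second step dispatches most $(x,y)$. For $x > 0$, take $s = \sqrt{x}$ and $t = y/\sqrt{x}$; then $xH + yK = M^2 + (y^2/x) I$ is a scalar shift of $M^2$, and the preceding paragraph applies. For $x < 0$, rewrite $xH + yK = -\bigl((-x)H + (-y)K\bigr)$ with $-x > 0$ and invoke the previous case, observing that negating all eigenvalues leaves their multiplicities unchanged. The case $y = 0$ is covered by $x \neq 0$ or is trivial.

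This leaves the line $x = 0$, $y \neq 0$, and this is the main obstacle: the parametrization $s^2 = x$, $st = y$ admits no real solution there. I would resolve it by a closure argument. The characteristic polynomial $p_{x,y}(\lambda) = \det(\lambda I - (xH + yK))$ has coefficients that depend polynomially, hence continuously, on $(x,y)$; and the set of monic polynomials of fixed degree $2n$ which are perfect squares is closed in the space of coefficients, since a coefficient-wise limit of squares $q_n^2$ with uniformly bounded roots yields a convergent subsequence of $q_n$ whose limit square reproduces the limit polynomial. Combined with the second step, which verifies that $p_{x,y}$ is a perfect square on the dense open set $\{x \neq 0\}$, closedness forces the perfect-square property on all of $\mathbb{R}^2$, completing the proof.
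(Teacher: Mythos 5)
Your proof is correct and takes essentially the same route as the paper: square the skew-symmetric pencil $sA+tB$ to obtain $xH+yK$ up to a scalar multiple of $I$, use the conjugate-pair structure of real skew-symmetric spectra to get even multiplicities, and handle the exceptional line $x=0$ by a limiting argument. Your closure-of-perfect-square-polynomials argument there (and your explicit even-rank treatment of the zero eigenvalue) is simply a more careful rendering of the paper's brief appeal to continuity of eigenvalues.
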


\begin{proof}
Consider first $Ax + By$. Take some $x_0$ and $y_0$ in $\mathbb{R}$, where $x_0 \neq 0$. The eigenvalues of the real skew-symmetric matrix $Ax_0 + By_0$ will be purely imaginary and will exist in conjugate pairs. Note that a given pair may appear more than once. Denote such pairs by $\pm i \lambda_{k}$, where $\lambda_{k}$ is real and $k$ ranges from 1 to $n$.

Then the eigenvalues of $(Ax_0 + By_0)^{2}$ will be $-\lambda^{2}_{k}$, obviously coming in pairs. Since the same pair of eigenvalues of $Ax_0 + By_0$ may occur several times, we cannot say for sure that each $-\lambda^{2}_{k}$ has multiplicity 2, but we can be sure that it has even multiplicity. Let  $v_{k}$ be an eigenvector of $Ax_0 + By_0$ with eigenvalue $i \lambda_{k}$, and $w_{k}$ be an eigenvector of $Ax_0 + By_0$ with eigenvalue $-i \lambda_{k}$. Since $v_{k}$ and $w_{k}$ belong to different eigenspaces of $Ax_0 + By_0$, the subspace $\mbox{span} \lbrace v, w \rbrace$ which they generate is two-dimensional.

 Then $(Ax_0 + By_0)^{2} v_{k} = -\lambda^{2}_{k} v_{k} $ and $(Ax_0 + By_0)^{2} w_{k} = -\lambda^{2}_{k} w_{k} $. But $A^{2} = H$, $AB + BA = K$, and $B^{2} = -I_{2n}$, so we have $$ (Ax_0 + By_{0} )^{2} = Hx_{0}^{2} + Kx_{0}y_{0} -  I_{2n}y_{0}^{2}, $$ so $$(Hx_{0}^{2} + Kx_{0}y_{0}) v_{k} =(y_{0} -\lambda^{2}_{k}) v_{k}. $$ Dividing through by $x_{0} \neq 0$ we have that $$(Hx_{0} + Ky_{0}) v_{k} =\frac{1}{x_{0}}(y_{0} -\lambda^{2}_{k}) v_{k}. $$  Therefore, $v_{k}$ is an eigenvector of $Hx_{0} + Ky_{0}$  with eigenvalue $\frac{1}{x_{0}}(y_{0} -\lambda^{2}_{k})$. Repeating this process for $w_{k}$, we see that 
$w_{k}$ is also an eigenvector of $Hx_{0} + Ky_{0}$  with eigenvalue $\frac{1}{x_{0}}(y_{0} -\lambda^{2}_{k})$. Therefore $v_k$ and $w_k$ span a two-dimensional eigenspace of $Hx_{0} + Ky_{0}$.

For the case where $x_{0} = 0$, we simply have $K y_{0}$, which has paired eigenvalues due to the continuity of eigenvalues with respect to $x_{0}$ and $y_{0}$ \cite{matrix_analysis}.

 Therefore, for every $x_{0}$ and $y_{0}$ in $\mathbb{R}$ $Hx_0 + Ky_0$ has eigenvalues all of even multiplicity. 
\end{proof}

\subsection{Definition of the counterexample family}

Theorem \ref{thm_construct_simple} gives fairly wide freedom in constructing counterexamples to Kippenhahn's Conjecture. Our objective will be to construct a counterexample with well separated pairs of eigenvalues, and then show how these eigenvalue pairs are `split' by a suitable choice of quantisation. We will ensure the separation of pairs of eigenvalues by making the counterexample diagonally dominant in a specific way.

Take an integer $n \geq 4$, real number $b \neq 0$ and let $q = \frac{n^{2}}{2} - 1$. As before, set $$\alpha = \left( \begin{array}{cc}
1 & 0 \\
0 & -1
\end{array} \right) \mbox{, } \beta = \left( \begin{array}{cc}
0 & 1 \\
1 & 0
\end{array} \right) \mbox{ and }  U = \left( \begin{array}{cc}
0 & -1 \\
1 & 0
\end{array} \right).$$ Now define $2n \times 2n$ matrices $A_{(b)}$ as in Equation \eqref{eqn_counter_A} and $ B = I_{n} \otimes U$. 

The reason for the specific choice of $q = \frac{n^{2}}{2} - 1$ will become clear when we set bounds on the eigenvalues of the system in Section \ref{sec_quantise}, as will the reason for isolating $q$ into a variable of its own rather than simply expressing the diagonal entries in terms of $n$. Define symmetric matrices $$H_{(b)} = (A_{(b)})^{2}, K = A_{(b)}B + BA_{(b)}. $$ Lemma \ref{lemma_eigenpairs} allows us to conclude that $xH_{(b)} + yK$ has eigenvalues of all even multiplicities, and its characteristic polynomial is of the form $f = g^{2}$. So we have met the first condition for Kippenhahn's conjecture.

\subsection{The matrix algebra generated} 

We will show that each of the three conditions of Theorem \ref{thm_construct_simple} applies to $H_{(b)}$ and $K$. \begin{enumerate}
\item Recall $K$ from Equation \eqref{eqn_K}. Clearly the first condition for Theorem \ref{thm_construct_simple} is satisfied.
\item Let us evaluate the $2 \times 2$ blocks of the top row and the second row of $H_{(b)}$. First the top row: $$\left( \begin{array}{cc}
 -q^2-(n-1) & 0\\
 0 & -q^2-(n-1) 
\end{array} \right), \left( \begin{array}{cc}
-2 b & -2\\
0 & -2 b
\end{array} \right), \left( \begin{array}{cc}
 b & -1\\
-3 & b 
\end{array} \right), $$ $$ \left( \begin{array}{cc}
 b & -3 \\
-3 & b
\end{array} \right), \left( \begin{array}{cc}
0 & -4 \\
 -4 & 0  
\end{array} \right),...,\left( \begin{array}{cc}
0 & -n+1 \\
 -n+1 & 0  
\end{array} \right). $$ Each of these blocks is invertible for all permitted values of $b$, that is $b \neq 0$, except for $b = \pm 3, \pm \sqrt{3}$. Now consider the second row: 

$$\left( \begin{array}{cc}
 -2 b & 0\\
 -2 & -2 b
\end{array} \right), \left( \begin{array}{cc}
-2 b^2-q^2-2 q-3 & 0\\
0 & -2 b^2-q^2-2 q-3
\end{array} \right), \left( \begin{array}{cc}
 0 & -b\\
-b & -2 
\end{array} \right), $$ $$ \left( \begin{array}{cc}
 -1 & -2 b \\
-2 b & -1
\end{array} \right), \left( \begin{array}{cc}
-1 & 0 \\
  0 & -1  
\end{array} \right),...,\left( \begin{array}{cc}
-1 & 0 \\
  0 & -1   
\end{array} \right). $$ Each of these blocks is invertible for all permitted values of $b$, except for $b = \pm \frac{1}{2}$. Therefore for any allowed value of $b$, at least one of these rows has all invertible blocks. By simultaneous symmetric permutation of $H_{(b)}$ and $K$, we can place the invertible row at the top. Such simultaneous symmetric permutation will not affect the algebra generated by $H_{(b)}$ and $K$, nor will it affect Condition 1 on $K$. Therefore $H_{(b)}$ satisfies Condition 2 in general.
\item Take the third and fourth blocks of the first row of $H_{(b)}$: $$H_{13} = \left( \begin{array}{cc}
 b & -1\\
-3 & b 
\end{array} \right), H_{14} = \left( \begin{array}{cc}
 b & -3 \\
-3 & b
\end{array} \right).  $$ Then the commutator of $H_{13}H_{13}^{T}$ and $H_{14}H_{14}^{T}$ is $\left(
\begin{array}{cc}
 0 & 48 b \\
 -48 b & 0 \\
\end{array}
\right)$ which must always be non-zero. The commutator of the products of the third and fourth blocks of the second row with their own transposes likewise evaluates to $\left(
\begin{array}{cc}
 0 & -16 b \\
 16 b & 0 \\
\end{array}
\right)$, which must also be non-zero. So no matter which row we have placed at the top in step 2, Condition 3 is satisfied.
\end{enumerate}

We can therefore conclude that $H_{(b)}$ and $K$ violate the Kippenhahn Conjecture.

\section{Quantisation of the new counterexample}\label{sec_quantise}

As mentioned in the Introduction, quantisation \cite{matricial_relaxation} of a linear pencil is the process of replacing the commutative variables $x_{i}$ with non-commutative variables $X_{i}$, typically hermitian matrices, to give an expression of the form $$L(X) = \sum X_{i} \otimes A_{i}.$$ Such quantised pencils arise particularly in coupled linear systems.

Quantising a linear pencil such as in the Kippenhahn conjecture, we have $$L(X,Y) = X \otimes H + Y \otimes K, $$ where $X$ and $Y$ are hermitian matrices. In this setting Kippenhahn's conjecture has been proven to hold true in an arbitrary number of variables as Corollary 5.6 of \cite{kv}, stated as Theorem \ref{thm_quant_kipp_intro} in the Introduction. This Quantum Kippenhahn Theorem holds true for some hermitian $X$ and $Y$, but the proof does not identify such $X$ and $Y$. We have been able to improve on this situation. For the counterexample introduced in Section \ref{sec_counter}, we have explicitly identified $2 \times 2 $ hermitian matrices $X$ and $Y$ which successfully quantise the counterexample and restore Kippenhahn's conjecture in a dimension-free setting. Our approach will be to note that the entries of $L_{(b)}$ vary smoothly with $b$, and set $b$ to zero. We will then use Gershgorin's Disc Theorem and the Fundamental Symmetric Polynomials to show the desired result for $b = 0$. We will then use results from perturbation theory \cite{kato} to extend this result to almost all values of $b$.

To begin with, let us reconsider $A_{(b)}$ as stated in Equation \eqref{eqn_counter_A}. Notice how each element of $A_{(b)}$ is a polynomial function at most linear in $b$, and recall that $q$ is fixed depending on $n$. Now set $b = 0$. It will be recalled that $b = 0$ is specifically excluded from the valid range of the counterexample, but we may still consider it as a way to inspect the behaviour of the eigenvalues of $L_{(b)} = X \otimes H_{(b)} + Y \otimes K$. Recall that $K$ does not have any dependence on $b$. 

With $b$ set to zero, we have $$A_{(0)} = {\footnotesize \left( \begin{array}{ccccccc}
q U & \alpha & \alpha & \alpha  & \alpha & \hdots & \alpha\\
-\alpha & (q+1)U & \beta & 0 \\
-\alpha & - \beta & (q+2)U & 0  \\
-\alpha  & 0 & 0 & (q+3)U   \\
-\alpha & \, & \, & \, & (q+4)U \\
\vdots & \, & \, & \,  & \, & \ddots \\
-\alpha & \, & \, & \,  & \, & \, & (q+n-1)U\\
\end{array}\right) }.$$ Notice how each $ 2 \times 2$ block is either purely diagonal (the $\alpha$ blocks) or purely off-diagonal (the $U$ and $\beta$ blocks). Therefore, we can perform a symmetric permutation of the rows and columns of $A_{(0)}$, and likewise of $B$, separating $A_{(0)}$ into two $n \times n$ diagonal and off-diagonal blocks. Remember that $A_{(0)}$ is of dimension $2n \times 2n$. This permutation splits the elements of the $\alpha$ blocks across the diagonal blocks and the elements of $U$ and $\beta$ across the off-diagonal blocks. This is simply a change of basis order, and by an abuse of notation we may continue to refer to the altered matrix as $A_{(0)}$: $$ {\footnotesize \left( \begin{array}{ccccc;{6pt/4pt}ccccc}
0 & 1 & 1 & \hdots & 1 & -q & 0 & 0 & \hdots & 0 \\
-1 & \, & \, & \, & \, & 0 & -(q+1) & 1 & \, & \, \\
-1 & \, & \, & \, & \, & 0 & -1 & -(q+2) & \, & \, \\
\vdots & \, & \, & \, & \, & \vdots & \, & \, & \ddots & \, \\
-1 &\, & \, & \, & \, & 0 & \, & \, & \, & -(q+n-1)  \\ \hdashline[6pt/4pt]
 q & 0 & 0 & \hdots & 0 & 0 & -1 & -1 & \hdots & -1\\
 0 & (q+1) & 1 & \, & \, & 1 & \, & \, & \, & \,\\
 0 & -1 & (q+2) & \, & \, & 1 & \, & \, & \, & \,\\
\vdots & \, & \, & \ddots & \, & \vdots & \, & \, & \, & \,\\
 0 & \, & \, & \, & (q+n-1) & 1 &\, & \, & \, & \,

\end{array} \right). }$$ Now we can symmetrically swap the $1^{st}$ and $(n+1)^{th}$ row, and likewise the $1^{st}$ and $(n+1)^{th}$ column, to get $$ {\footnotesize \left( \begin{array}{ccccc;{6pt/4pt}ccccc}
\, & \, & \, & \, & \, & q & -1 & -1 & \hdots & -1 \\
\, & \, & \, & \, & \, & -1 & -(q+1) & 1 & \, & \, \\
\, & \, & \, & \, & \, & -1 & -1 & -(q+2) & \, & \, \\
\, & \, & \, & \, & \, & \vdots & \, & \, & \ddots & \, \\
\, &\, & \, & \, & \, & -1 & \, & \, & \, & -(q+n-1)  \\ \hdashline[6pt/4pt]
 -q & 1 & 1 & \hdots & 1 & \, & \, & \, & \, & \,\\
 1 & (q+1) & 1 & \, & \, & \, & \, & \, & \, & \,\\
 1 & -1 & (q+2) & \, & \, & \, & \, & \, & \, & \,\\
\vdots & \, & \, & \ddots & \, & \, & \, & \, & \, & \,\\
 1 & \, & \, & \, & (q+n-1) & \, &\, & \, & \, & \,

\end{array} \right). }$$

It is important to note that, with these symmetric permutations, the eigenvalues of $H_{(0)}$ and $K$ and any linear pencils thereof, as well as the algebra generated by $H_{(0)}$ and $K$, are not affected. Therefore we can freely adjust $H_{(0)}$ and $K$ to suit our purposes, so long as the permutations are symmetric and applied equally to both $H_{(0)}$ and $K$, or $A_{(0)}$ and $B$.

We can look at $A_{(0)}$ as being composed of four blocks $$A_{(0)} = \left( \begin{array}{cc}
0 & A_{1} \\
-A_{1}^{T} & 0
\end{array} \right), $$ and therefore we will have $$H_{(0)} = \left( \begin{array}{cc}
-A_{1}A_{1}^{T} & 0 \\
0 & -A_{1}^{T}A_{1}
\end{array} \right) =  \left( \begin{array}{cc}
H_{1} & 0 \\
0 & H_{2}
\end{array} \right).$$ Likewise, $$K = \left( \begin{array}{cc}
K_{1} & 0 \\
0 & K_{1}
\end{array} \right), $$ $$K_{1} = \left( \begin{array}{cccc}
-2q & 0 & 0 & 0 \\
0 & -2(q + 1) & 0 & 0 \\
0 & 0 & \ddots & 0 \\
0 & 0 & 0 & -2(q + n - 1)
\end{array} \right). $$This is the critical difference between the counterexample family introduced here as compared with the counterexample family of \cite{burnside_graphs}. This counterexample family becomes block diagonal when the parameter $b$ is set to the forbidden value of zero, whereas the previous counterexample family of \cite{burnside_graphs} did not. Notice how $A_{1}$ is almost symmetric. It differs from its transpose only in the (2,3) and (3,2) entries.  Let us see what effect this has on $H_{1}$ and $H_{2}$. Let $$A_{1} = \left( \begin{array}{cc}
A_{2} & P \\ 
P^{T} & Q
\end{array} \right), $$ where $Q$ is diagonal, $$A_{2} = \left( \begin{array}{ccc}
q & -1 & -1 \\
-1 & -(q + 1) & 1\\
-1 & -1 & -(q + 2)
\end{array} \right), $$ $$P = \left( \begin{array}{ccc}
-1 & \, & -1 \\
 0 & \hdots & 0 \\
 0 & \, & 0 \\
\end{array} \right). $$ Let us now evaluate the two diagonal blocks of $H_{(0)}$: $$H_{1} = -\left( \begin{array}{cc}
A_{2}A_{2}^{T} + PP^{T} & A_{2}P + PQ \\
P^{T}A_{2}^{T} + QP^{T} & P^{T}P + Q^{2}
\end{array} \right), $$ $$ H_{2} = -\left( \begin{array}{cc}
A_{2}^{T}A_{2} + PP^{T} & A_{2}^{T}P + PQ \\
P^{T}A_{2} + QP^{T} & P^{T}P + Q^{2}
\end{array} \right).   $$ Observe that $$A_{2}P = A_{2}^{T}P = \left( \begin{array}{ccc}
-q & \, & -q \\
 1 & \hdots & 1 \\
 1 & \, & 1 \\
\end{array} \right). $$ Therefore $H_{1}$ and $H_{2}$ differ only in the top left $3 \times 3$ block. We can evaluate them explicitly as $${\footnotesize H_{1} = \left(
\begin{array}{cccccc}
 -q^2-n+1 & 0 & -3 & -3 & \hdots & -n+1 \\
 0 & -q^2-2 q-3 & 0 & -1 & \, & -1 \\
 -3 & 0 & -q^2-4 q-6 & -1 & \, & -1 \\
 -3 & -1 & -1 & -q^2-6 q-10 & \, & -1 \\
 \vdots & \, & \, & \, & \ddots & \vdots \\
 -n+1 & -1 & -1 & -1 & \hdots & -(q+n-1)^2-1 \\
\end{array}
\right),} $$ $${\footnotesize H_{2} = \left(
\begin{array}{cccccc}
 -q^2-n+1 & -2 & -1 & -3 & \hdots & -n+1 \\
 -2 & -q^2-2 q-3 & -2 & -1 & \, & -1 \\
 -1 & -2 & -q^2-4 q-6 & -1 & \, & -1 \\
 -3 & -1 & -1 & -q^2-6 q-10 & \, & -1 \\
 \vdots & \, & \, & \, & \ddots & \vdots \\
 -n+1 & -1 & -1 & -1 & \hdots & -(q+n-1)^2-1 \\
\end{array}
\right). }$$ Notice how the $q$ terms cancel out everywhere except along the diagonal, and additionally how there is a difference of about $-2q$ between consecutive diagonal entries in both $H_{1}$ and $H_{2}$. We are going to make this observation precise, and use the diagonal entries to get the eigenvalues under control, using the following famous theorem of Gershgorin.

\begin{theorem}[Gershgorin's Disc Theorem \cite{matrix_analysis}] \label{gershgorin}
Let $M = (m_{ij})  \in M_{n}(\mathbb{C})$, and let $$ R_{i}(M) = \sum_{j \neq i }^{n} |m_{ij}|,$$ where $1 \leq i \leq n$, denote the \emph{deleted absolute row sums} of $M$. Then all the eigenvalues of $M$ are located in the union of $n$ discs $$\bigcup_{i=1}^{n} \, \lbrace z \in \mathbb{C}: |z - m_{ii}| \leq R_{i}(M) \rbrace. $$ Furthermore, if a union of $k$ of these $n$ discs forms a connected region that is disjoint from all the remaining $n-k$ discs, then there are precisely $k$ eigenvalues of $M$ in this region. If all of the Gershgorin discs of $M$ are disjoint, then $M$ has only simple eigenvalues.
\end{theorem}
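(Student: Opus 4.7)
The plan is to prove the two parts separately using standard techniques. For the disc enclosure part, I would take any eigenpair $(\lambda,v)$ of $M$ and choose an index $i$ with $|v_i|=\max_j|v_j|>0$. The $i$-th component of the equation $Mv=\lambda v$ isolates $(\lambda-m_{ii})v_i$ as a sum involving only the off-diagonal entries of row $i$, so the triangle inequality combined with $|v_j|\leq|v_i|$ immediately yields $|\lambda-m_{ii}|\leq R_i(M)$, placing $\lambda$ in the $i$-th Gershgorin disc.

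For the counting part, I would use a homotopy. Let $D$ be the diagonal part of $M$ and define $M(t)=D+t(M-D)$ for $t\in[0,1]$, so $M(0)=D$ and $M(1)=M$. Each row of $M(t)$ has deleted absolute row sum $tR_i(M)\leq R_i(M)$, so the Gershgorin disc of $M(t)$ centred at $m_{ii}$ is contained in the corresponding disc of $M$. At $t=0$ the eigenvalues of $M(t)$ are exactly the centres $m_{ii}$, so the number of eigenvalues (with multiplicity) in any prescribed union of discs is clear by inspection. As $t$ grows the eigenvalues move continuously (being continuous functions of the coefficients of the characteristic polynomial of $M(t)$), and by the first part of the theorem applied to $M(t)$ they are always trapped inside the Gershgorin region of $M$. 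A union of $k$ discs that is disjoint from the other $n-k$ discs is therefore a connected component of this trapping region that cannot exchange eigenvalues with the complementary component along the path, so the count at $t=1$ agrees with the count at $t=0$, which is the number $k$ of centres $m_{ii}$ it contains. The final clause (all discs disjoint implies simple eigenvalues) is just the special case $k=1$ for every component.

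The main obstacle is the continuity-of-eigenvalues ingredient invoked in the homotopy step: one must justify that the unordered multiset of roots of $\det(zI-M(t))$ is continuous in $t$ and that no eigenvalue can jump instantaneously across the gap between an isolated union of $k$ discs and its complement in the Gershgorin region. Both facts are standard consequences of the continuity of the roots of a monic polynomial in its coefficients, or alternatively of Rouch\'e's theorem applied to $\det(zI-M(t))$ on the boundary of the isolated component; these are the only nontrivial analytic inputs, and with them in place the combinatorial counting statement follows immediately.
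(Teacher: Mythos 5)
Your proposal is correct: the maximum-modulus eigenvector component argument gives the disc enclosure, and the homotopy $M(t)=D+t(M-D)$ together with continuity of eigenvalues and the positive separation between an isolated union of $k$ discs and the remaining discs gives the counting statement, with the simplicity claim as the case of singleton components. The paper does not prove this theorem at all --- it quotes it as a classical result from the cited reference (Horn and Johnson) --- and your argument is precisely the standard proof found there, so there is nothing to reconcile.
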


Since every matrix is similar to its transpose, the theorem may also be stated in terms of the \emph{deleted absolute column sums}. For a non-hermitian matrix this set of column discs may provide a more accurate estimate. With hermitian matrices the two sets of sums are equal.

\begin{lemma}\label{H_similiar}
$H_{1}$ and $H_{2}$ have only simple eigenvalues and are similar for all $n \geq 4$.
\end{lemma}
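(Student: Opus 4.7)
The plan is to deduce similarity from two facts: that $H_1$ and $H_2$ share the same characteristic polynomial, and that this common polynomial has only simple roots. Since both matrices are real symmetric, having the same (simple) spectrum will then imply that they are orthogonally similar.

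For the first fact, note that $H_1 = -A_1 A_1^T$ and $H_2 = -A_1^T A_1$ with $A_1$ square. The standard identity $\det(\lambda I - XY) = \det(\lambda I - YX)$ for square matrices $X, Y$ yields at once that $A_1 A_1^T$ and $A_1^T A_1$ share the same characteristic polynomial, and hence so do $H_1$ and $H_2$. It therefore suffices to show that $H_1$ alone has $n$ distinct eigenvalues.

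For this I would apply Theorem \ref{gershgorin} to $H_1$. The diagonal entries given in the excerpt, namely $d_1 = -q^2 - (n-1)$, $d_2 = -q^2 - 2q - 3$, $d_3 = -q^2 - 4q - 6$, and $d_k = -(q+k-1)^2 - 1$ for $k \geq 4$, are strictly decreasing in $k$, so the disc centres already lie on the real line in sorted order. A direct reading of the rows of $H_1$ gives deleted absolute row sums $R_1 = (n^2 - n)/2$, $R_2 = n - 3$, $R_3 = n$, and $R_k = k + n - 3$ for $k \geq 4$. When disc centres are real and sorted, the pairwise disjointness of all $n$ discs reduces by a telescoping argument to the consecutive inequalities $d_k - d_{k+1} > R_k + R_{k+1}$ alone. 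Substituting $q = n^2/2 - 1$, each such inequality collapses to an elementary polynomial inequality in $n$---for example $n^2 - 3n + 10 > 0$ in the pair $(d_1, d_2)$---all of which hold for every $n \geq 4$. The Gershgorin discs are therefore pairwise disjoint, $H_1$ has $n$ simple eigenvalues, and by the first step so does $H_2$ (with the same eigenvalues). Real symmetry then gives orthogonal similarity.

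The main delicacy is the pair $(d_1, d_2)$: here $R_1$ grows like $n^2/2$, of the same order as $q$ itself, so the separation of these two discs only barely survives. This, I believe, is precisely why the value $q = n^2/2 - 1$ was fixed in Section \ref{sec_counter}---any substantially smaller choice of $q$ would cause the first two Gershgorin discs to overlap and would sabotage the argument. For all subsequent pairs the diagonal gaps grow linearly in $q$ (hence quadratically in $n$) while the row sums grow only linearly in $n$, so those inequalities are comfortable.
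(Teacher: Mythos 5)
Your proof is correct, but it reaches the similarity statement by a genuinely different route than the paper. For the Gershgorin half you do essentially what the paper does (same diagonal entries, same row sums for $H_1$, the tight pair being $(D(1),D(2))$); the only cosmetic difference is that you verify disjointness via the consecutive-pair inequalities $d_k-d_{k+1}>R_k+R_{k+1}$ plus telescoping, while the paper checks that the largest radius $R_1=\tfrac12 n(n-1)$ is below half the smallest gap $D(1)-D(2)=n^2-n+2$; both criteria are valid and your sample inequality $n^2-3n+10>0$ is the correct reduction for the critical pair. The real divergence is in how equality of spectra is obtained. The paper runs Gershgorin separately on $H_2$ as well (its second row sum is $n+1$ rather than $n-3$, so this is not automatic from the $H_1$ computation), and then deduces that $H_1$ and $H_2$ share their spectra from the structural fact that $H_{(0)}=A_{(0)}^2$ with $A_{(0)}$ skew-symmetric forces all eigenvalues of the block-diagonal matrix $\operatorname{diag}(H_1,H_2)$ to have even multiplicity, so no eigenvalue can belong to only one block. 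You instead invoke $\det(\lambda I - A_1A_1^T)=\det(\lambda I - A_1^TA_1)$ for the square matrix $A_1$, which gives equality of characteristic polynomials directly, transfers simplicity from $H_1$ to $H_2$ for free, and yields orthogonal similarity of the two real symmetric matrices without even needing simplicity. Your route is shorter and spares the second Gershgorin computation; the paper's route keeps the argument tied to the even-multiplicity mechanism of Lemma~\ref{lemma_eigenpairs}, which is the structural theme exploited (and deliberately broken) later in the quantisation section. Either way the lemma is established.
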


\begin{proof}
Let us examine the spacing of the diagonal entries of $H_{1}$ and $H_{2}$. Denote these diagonal entries by $D(t)$. The first three of these are $$D(1) = -q^2-n+1, \quad D(2) =  -q^2-2 q-3, \quad D(3) =  -q^2-4 q-6, $$ and the remainder are of the form $$D(t) = -(q  + t - 1)^{2} - 1,$$ for $4 \leq t \leq n.$ Recall the definition of $q$: \begin{align*}
q &= \frac{n^{2}}{2} - 1 \\ 
&= 1 + \frac{(n-2)(n + 2)}{2}.
\end{align*} We can evaluate \begin{align*}
D(1) - D(2) &= -n + 2q + 4 = n^{2} - n + 2 > 0.
\end{align*} Therefore we have $D(1) > D(2)$. Now compare the remaining diagonal entries: \begin{align*}
D(2) - D(3) &= 2q + 3 = n^{2} + 1 > 0, \\
D(3) - D(4) &= 2q + 4 = n^{2} + 2 > 0, \\
D(t) - D(t+1) &= -(q  + t - 1)^{2} + (q  + t)^{2} \\
&= 2(q + t) - 1  \\
&= n^2 - 2  + 2t -1 = n^{2} - 3 + 2t > 0,  \qquad \mbox{where } 4 \leq t \leq n-1.
\end{align*} So the diagonal entries of $H_{1}$ and $H_{2}$ form a descending sequence $$D(1) > D(2) > \hdots > D(n), $$ and the difference between entries always increases. The smallest difference is $D(1) - D(2)$, regardless of $n$.

Now let us examine the row sums of $H_{1}$ and $H_{2}$, as in Gershgorin's theorem. It is seen that $$R_{1}(H_{1}) = R_{1}(H_{2}) = \sum_{i = 1}^{n-1} i = \frac{1}{2}n(n-1).$$ Likewise, $$R_{2}(H_{1}) = n-3, \quad R_{2}(H_{2})= n+1,$$ $$R_{3}(H_{1}) = R_{3}(H_{2})= n, \quad R_{t}(H_{1}) = R_{t}(H_{2}) = n+t-3, \quad 4\leq t \leq n.$$ Remember that the row sums are absolute values. The largest of the row sums $ R(2),\hdots , R(n) $ is the last one, $R(n) = 2n-3$. Let us compare this to the first row sum $R(1)$: \begin{equation*}
R(1) - R(n) = \frac{1}{2}n^{2} - \frac{1}{2}n - 2n +3 = \frac{1}{2}(n-4)(n-1) + 1 > 0 \quad \forall \, n \geq 4.
\end{equation*} Therefore the radius of the first Gershgorin disc of $H_{1}$ and $H_{2}$, centered on $D(1)$, is the largest of all the Gershgorin discs. Furthermore, $$D(1) - D(2) < D(t) - D(t + 1) \quad \forall \, 2 \leq t \leq n-1. $$ This can be easily verified for $n \geq 4$, the allowed range of $n$, by referring to the separations of the $\lbrace D(t) \rbrace$ shown above. We can see that \begin{equation*}
\frac{1}{2}(D(1) - D(2)) = \frac{1}{2}(n^{2} - n + 2) > \frac{1}{2}n(n-1) = R(1).
\end{equation*} Therefore the largest of the Gershgorin discs of both $H_{1}$ and $H_{2}$ has a radius less than half of the smallest distance between two consecutive diagonal entries, when treated as points on the real line. Therefore all Gershgorin discs of $H_{1}$ and $H_{2}$, considered as separate matrices, are disjoint. Theorem \ref{gershgorin} tells us that $H_{1}$ and $H_{2}$ both therefore have only simple eigenvalues. However, $H_{(b)} = \left( \begin{array}{cc}
H_{1} & 0 \\
0 & H_{2} 
\end{array} \right)$ is the square of a skew-symmetric matrix $A_{(b)}$. Since skew-symmetric matrices have purely imaginary eigenvalues in complex-conjugate pairs, $H_{(b)}$ must be negative definite with eigenvalues appearing with even multiplicity. But because $H_{(b)}$ is block diagonal, its eigenvalues are simply the union of the eigenvalues of $H_{1}$ and $H_{2}$. We have just shown that $H_{1}$ and $H_{2}$ have only simple eigenvalues (of multiplicity 1). Therefore if $H_{1}$ and $H_{2}$ had eigenvalues $\lambda$ and $\rho$ respectively, with $\lambda \neq \rho$, then both $\lambda$ and $\rho$ would be simple eigenvalues of $H_{(b)}$. Since this is impossible, we must conclude that $H_{1}$ and $H_{2}$ must have exactly the same eigenvalues, all simple. Thus $H_{1}$ and $H_{2}$ are similar.

\end{proof}

\begin{prop}
$H_{1}$ and $H_{2}$ are not trivially similar. That is, a similarity matrix $S$ relating $H_{1}$ and $H_{2}$ cannot be of the form $$S = \left( \begin{array}{cc}
\widetilde{S} & \, \\
\, & I_{n-3}
\end{array} \right), $$ where $\widetilde{S}$ is a similarity matrix relating the the first principal $3\times 3$ blocks of $H_{1}$ and $H_{2}$.
\end{prop}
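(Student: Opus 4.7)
The plan is to argue by contradiction. Suppose such an $S$ exists with $S H_1 = H_2 S$, and write
\[
H_1 = \begin{pmatrix} T_1 & C \\ C^T & D \end{pmatrix}, \qquad H_2 = \begin{pmatrix} T_2 & C \\ C^T & D \end{pmatrix}, \qquad S = \begin{pmatrix} \widetilde{S} & 0 \\ 0 & I_{n-3} \end{pmatrix},
\]
using the fact already established that $H_1$ and $H_2$ differ only in their first principal $3\times 3$ block. Expanding $SH_1 = H_2 S$ in blocks yields three nontrivial conditions: (i)~$\widetilde{S} T_1 = T_2 \widetilde{S}$, (ii)~$\widetilde{S} C = C$, and (iii)~$C^T \widetilde{S} = C^T$. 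Equivalently, (ii) and (iii) say that every column of $C$ is fixed by both $\widetilde{S}$ and $\widetilde{S}^T$.

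The crux is to analyse the column space of $C \in M_{3,n-3}(\mathbb{R})$. Reading off the explicit form of $H_1$, the $j$-th column of $C$ is $(-(j+2),\, -1,\, -1)^T$ for $j = 1, \ldots, n-3$, and these span exactly the two-dimensional subspace $V := \mathrm{span}\{e_1,\, (0,1,1)^T\}$ (since $e_1$ arises as a difference of consecutive columns). Hence $\widetilde{S}$ and $\widetilde{S}^T$ both fix $V$ pointwise. From $\widetilde{S} e_1 = e_1$ and $\widetilde{S}^T e_1 = e_1$ we read off that the first row and the first column of $\widetilde{S}$ are $e_1^T$ and $e_1$; imposing the analogous constraints at $(0,1,1)^T$ then reduces $\widetilde{S}$ to the one-parameter family
\[
\widetilde{S} = \begin{pmatrix} 1 & 0 & 0 \\ 0 & a & 1-a \\ 0 & 1-a & a \end{pmatrix}
\]
for some scalar $a$.

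To finish, I would substitute this form into equation (i) and compare two off-diagonal entries. The $(1,2)$ entry of $\widetilde{S} T_1$ is $0$ while that of $T_2 \widetilde{S}$ is $-a-1$, forcing $a = -1$. The $(2,1)$ entry of $\widetilde{S} T_1$ is $-3(1-a)$ while that of $T_2 \widetilde{S}$ is $-2$, forcing $a = 1/3$. These two values are incompatible, which yields the desired contradiction. The main obstacle is correctly identifying the two-dimensional fixed subspace $V$ from the size-dependent structure of $C$; once $V$ is pinned down, the argument is purely linear-algebraic and independent of $n$, so the conclusion holds uniformly for all $n \geq 4$.
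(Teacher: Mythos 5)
Your block-intertwining approach is genuinely different from the paper's proof, which is a one-line determinant comparison: the first principal $3\times 3$ blocks of $H_{1}$ and $H_{2}$ have determinants differing by $-4(n-3)\neq 0$, so they are not similar at all and no $\widetilde{S}$ exists in the first place. Your route instead shows that the three block conditions $\widetilde{S}T_{1}=T_{2}\widetilde{S}$, $\widetilde{S}C=C$, $C^{T}\widetilde{S}=C^{T}$ are jointly inconsistent; the reduction to the one-parameter family $\widetilde{S}=\operatorname{diag}\!\left(1,\begin{smallmatrix} a & 1-a \\ 1-a & a\end{smallmatrix}\right)$ and the entry comparisons forcing $a=-1$ and $a=1/3$ are all correct as far as they go.

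However, there is a genuine gap at $n=4$, which lies in the proposition's stated range. Your identification of the pointwise-fixed subspace $V=\operatorname{span}\{e_{1},(0,1,1)^{T}\}$ relies on taking differences of consecutive columns of $C$, so it requires at least two columns, i.e.\ $n\geq 5$. For $n=4$ the block $C$ is the single column $(-3,-1,-1)^{T}$, the subspace that $\widetilde{S}$ and $\widetilde{S}^{T}$ must fix is only one-dimensional, and conditions (ii)--(iii) leave a family of admissible $\widetilde{S}$ strictly larger than your one-parameter family, so the final comparison of the $(1,2)$ and $(2,1)$ entries does not settle that case. To close it you would either have to redo the (larger) linear analysis at $n=4$ separately, or invoke the hypothesis that $\widetilde{S}$ is an actual similarity of the $3\times 3$ blocks: since $\det T_{1}-\det T_{2}=-4(n-3)\neq 0$ for all $n\geq 4$, no such similarity exists, which repairs the $n=4$ case and, as in the paper, in fact proves the whole proposition without any block computation.
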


\begin{proof} 
If two matrices are similar, they must have the same determinant. Compare the determinants of the upper-left $3\times 3$ blocks of $H_{1}$ and $H_{2}$: $$\mbox{Det}{\small \left( \begin{array}{ccc}
 -q^2-n + 1 & 0 & -3 \\
 0 & -q^2-2 q-3 & 0 \\
 -3 & 0 & -q^2-4 q-6 \\
\end{array} \right) }$$ $$ = -n q^4-6 n q^3-17 n q^2-24 n q-18 n-q^6-6 q^5-16 q^4-18 q^3+8 q^2+42 q+45, $$
$$\mbox{Det} {\small \left( \begin{array}{ccc}
 -q^2-n + 1 & -2 & -1 \\
 -2 & -q^2-2 q-3 & -2 \\
 -1 & -2 & -q^2-4 q-6 \\
\end{array} \right)}$$ $$ = -n q^4-6 n q^3-17 n q^2-24 n q-14 n-q^6-6 q^5-16 q^4-18 q^3+8 q^2+42 q+33. $$ Taking the difference, we have $\mbox{Det}(H_{1}) - \mbox{Det}(H_{2}) = -4 (n-3)$, which is non-zero for all $n \geq 4$. Therefore the first principal $3\times 3$ blocks of $H_{1}$ and $H_{2}$ are not similar and so no trivial similarity matrix $S = \mbox{diag}( \widetilde{S}, I_{2n-3} )$ can be constructed for $H_{1}$ and $H_{2}$.
\end{proof}

Before proceeding we introduce some notation and give a few preliminary lemmas. 

\begin{defn}[Submatrix notation]
Given a square matrix $M$ of order $n$ and an indexing set $I \subseteq \lbrace 1, 2, ..., n \rbrace $, we may denote the principal submatrix consisting of the rows and columns indexed by $I$ as $M\lbrace I \rbrace$.
\end{defn}

\begin{defn}[Anchor point]\label{defn_anchor_point}
Given a submatrix $M\lbrace I \rbrace$ we may refer to the elements of $I$ as the \emph{anchor points} of $M\lbrace I \rbrace$.
\end{defn}

\begin{example}
Let $$M = \left( \begin{array}{cccc}
1 & 2 & 3 & 4 \\
2 & 5 & 6 & 7 \\
3 & 6 & 8 & 9 \\
4 & 7 & 9 & 0
\end{array} \right), $$ then $$M\lbrace 1,3 \rbrace  = \left( \begin{array}{cc}
1 & 3 \\
3 & 8
\end{array} \right),$$ and has anchor points $1$ and $3$.
\end{example}


The elementary symmetric polynomials in $n$ variables are a useful and familiar tool. We define the following notation:

\begin{defn}[Elementary symmetric polynomial on eigenvalues]\label{defn_elementary_symmetric_polys}
Let $M$ be an $n \times n$ hermitian matrix, and let $m_{1},...,m_{n}$ be its eigenvalues. Let $k \leq n$ be a natural number, and define $K$ as the set of all indexing sets of natural numbers of the form $\lbrace i_{1},i_{2} , ..., i_{k} \rbrace $ where $i_{1}<i_{2} < ...< i_{k} $. Then denote $$e_{k}(M) = \sum_{I \in K} \prod_{i \in I} m_{i}. $$ This is the $k^{th}$ elementary symmetric polynomial evaluated on the eigenvalues of $M$.
\end{defn}

\begin{example}
Suppose $M$ has eigenvalues $m_{1}, m_{2}, m_{3}$. Then $$e_{2}(M) = m_{1}m_{2} + m_{1}m_{3} + m_{2}m_{3}. $$
\end{example}

\begin{lemma}\label{lemma_elementary_symmetric_polys}
Take a matrix $M$ with eigenvalues $m_{1},...,m_{n}$. Let $k \leq n$ be a natural number, and define $K$ as the set of all indexing sets of natural numbers of the form $\lbrace i_{1},i_{2} , ..., i_{k} \rbrace $ where $i_{1}<i_{2} < ...< i_{k} $. Then $$e_{k}(M) = \sum_{I \in K} \mbox{Det } M\lbrace I \rbrace . $$
\end{lemma}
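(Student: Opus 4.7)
The plan is to prove the identity by comparing two expansions of the characteristic polynomial $p_M(t) = \det(tI_n - M)$. On one hand, factoring gives
\[
p_M(t) = \prod_{i=1}^{n} (t - m_i) = \sum_{k=0}^{n} (-1)^{k} e_k(M)\, t^{n-k},
\]
so the coefficient of $t^{n-k}$ in $p_M(t)$ is precisely $(-1)^k e_k(M)$. So it suffices to show that this same coefficient equals $(-1)^k \sum_{I \in K} \det M\{I\}$.

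To extract the coefficient of $t^{n-k}$ directly from the determinant, I would write $tI_n - M$ column by column as $t e_j - M_{\cdot j}$, where $e_j$ is the $j$-th standard basis vector and $M_{\cdot j}$ is the $j$-th column of $M$. Using multilinearity of the determinant in its columns, $\det(tI_n - M)$ expands into $2^n$ terms indexed by subsets $J \subseteq \{1,\ldots,n\}$, where columns indexed by $J$ contribute the $t e_j$ factor and the remaining columns contribute $-M_{\cdot j}$. Collecting by $|J^c| = k$, the coefficient of $t^{n-k}$ becomes
\[
(-1)^{k} \sum_{J : |J^c|=k} \det\bigl(\text{matrix with } e_j \text{ in col.\ } j \in J, \ M_{\cdot j} \text{ in col.\ } j \notin J\bigr).
\]
Each such determinant can be computed by Laplace expansion along the columns containing the standard basis vectors $e_j$ ($j\in J$): these columns force the corresponding rows to be selected trivially, and the remaining $k\times k$ minor is exactly the principal submatrix of $M$ indexed by $I = J^c$. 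Hence this term equals $\det M\{I\}$.

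Putting the two expressions for the coefficient of $t^{n-k}$ together and cancelling the common sign $(-1)^k$ yields
\[
e_k(M) = \sum_{I \in K} \det M\{I\},
\]
as desired. There is no real obstacle here — the only delicate point is bookkeeping of the signs in the column-by-column multilinearity expansion and verifying that the Laplace cofactor along the standard-basis columns reduces to the principal minor on the complementary index set. Once that is checked, the identity follows immediately from uniqueness of polynomial coefficients.
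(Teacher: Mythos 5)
Your proof is correct. The paper does not give an argument at all here---it simply cites Theorem 1.2.12 of Horn and Johnson's \emph{Matrix Analysis}---and what you have written is precisely the standard proof of that cited result: compare the coefficient of $t^{n-k}$ in $\det(tI_n - M)$ obtained from the factorisation $\prod_i (t-m_i)$ (which gives $(-1)^k e_k(M)$, using the eigenvalues counted with algebraic multiplicity) with the one obtained from column multilinearity and Laplace expansion along the standard-basis columns (which gives $(-1)^k$ times the sum of the $k\times k$ principal minors). The sign bookkeeping you flag does work out, since each expansion along a column $e_j$ selects the diagonal position with cofactor sign $+1$, leaving exactly the principal submatrix on the complementary index set. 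So your write-up is a self-contained version of the fact the paper outsources to the reference, and it applies verbatim in the hermitian setting in which the paper uses the lemma.
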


\begin{proof}
See Theorem 1.2.12 in \cite{matrix_analysis}, page 42.
\end{proof}

\begin{lemma}\label{off_diag_lemma}
The difference in determinant between matrices $\left( \begin{array}{cc}
a & b \\
c & d
\end{array} \right) \mbox{ and } \left( \begin{array}{cc}
a & e \\
f & d
\end{array} \right) $ is ${ef - bc}$. Likewise, the difference in determinant between matrices $\left( \begin{array}{cc}
a & b \\
c & d
\end{array} \right) \mbox{ and } \left( \begin{array}{cc}
e & b \\
c & f
\end{array} \right) $ is $ad - ef$.
\end{lemma}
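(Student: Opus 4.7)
The plan is simply to apply the standard $2\times 2$ determinant formula $\det\begin{pmatrix} p & q \\ r & s \end{pmatrix} = ps - qr$ to each matrix in sight and subtract. For the first claim, I would write
\[
\det\begin{pmatrix} a & b \\ c & d \end{pmatrix} - \det\begin{pmatrix} a & e \\ f & d \end{pmatrix} = (ad - bc) - (ad - ef) = ef - bc,
\]
noting that the diagonal product $ad$ cancels because the diagonal entries are held fixed. For the second claim,
\[
\det\begin{pmatrix} a & b \\ c & d \end{pmatrix} - \det\begin{pmatrix} e & b \\ c & f \end{pmatrix} = (ad - bc) - (ef - bc) = ad - ef,
\]
where this time the off-diagonal product $bc$ cancels because the off-diagonal entries are shared.

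There is essentially no obstacle: the lemma is a one-line bookkeeping identity. Its purpose in the paper appears to be to streamline later comparisons of principal $2\times 2$ minors — presumably when evaluating the differences of elementary symmetric polynomials $e_k(H_1) - e_k(H_2)$ via Lemma \ref{lemma_elementary_symmetric_polys}, since $H_1$ and $H_2$ differ only in their upper-left $3\times 3$ block and the two formulas above pinpoint exactly which $2\times 2$ minors actually contribute to such a difference (those whose varying entries sit either both off-diagonal or both on-diagonal).
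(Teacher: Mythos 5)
Your computation is correct and is exactly the intended argument: the paper's own proof is simply "Straightforward," and your two-line expansion via $\det\left(\begin{smallmatrix} p & q \\ r & s \end{smallmatrix}\right) = ps - qr$ supplies the details, with the signs matching the convention (first determinant minus second) used in the lemma. Nothing further is needed.
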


\begin{proof}
Straightforward.
\end{proof}

%

Now, let us define $$X = \left( \begin{array}{cc}
1 & 0 \\
0 & 0 
\end{array} \right), \quad Y = \left( \begin{array}{cc}
0 & q^{-2} \\
q^{-2} & 1 
\end{array} \right). $$

We will now set $b = 0$ and examine the eigenvalues of $$L_{(0)} = X \otimes H_{(0)} + Y \otimes K  =  \left( \begin{array}{cc}
H_{(0)} & q^{-2} K \\
q^{-2} K & K
\end{array} \right).$$ We can split $L_{(0)}$ into two $2n \times 2n$ diagonal blocks using symmetric permutation: $$L(0) = \left( \begin{array}{cc;{6pt/4pt}cc}
H_{1} & q^{-2}K_{1} & 0 & 0 \\
q^{-2}K_{1} & K_{1} & 0 & 0 \\ \hdashline[6pt/4pt]
0 & 0 & H_{2} & q^{-2}K_{1} \\
0 & 0 & q^{-2}K_{1} & K_{1}
\end{array} \right) = \left( \begin{array}{cc}
L_{1} & 0 \\
0 & L_{2}
\end{array} \right). $$ Here we are denoting the diagonal blocks of $L_{(0)}$ by $L_{1}$ and $L_{2}$.

\begin{lemma}\label{L_simple}
$L_{1}$ and $L_{2}$ each have only simple eigenvalues.
\end{lemma}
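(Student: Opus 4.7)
The plan is to apply Gershgorin's Disc Theorem (Theorem~\ref{gershgorin}) directly to each of $L_1$ and $L_2$. These are $2n \times 2n$ matrices whose diagonal entries split into two well-separated clusters on the real line: the first $n$ entries are the diagonals $D(1), \ldots, D(n)$ of $H_i$, all of order $-q^2$, while the last $n$ entries are the diagonals $-2q, -2(q+1), \ldots, -2(q+n-1)$ of $K_1$, all of order $-2q$. Because $q = n^2/2 - 1$ grows quadratically in $n$, the two clusters are separated by a gap of order $q^2$, while the coupling block $q^{-2} K_1$ between them has entries bounded by $2(q+n-1)/q^2 < 4/q$.

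First I would tabulate the deleted absolute row sums of $L_i$. Since $K_1$ is diagonal, the off-diagonal block $q^{-2} K_1$ contributes exactly one non-zero entry per row: row $t$ (for $1 \le t \le n$) has deleted absolute row sum $R_t(H_i) + 2(q+t-1)/q^2$, which exceeds the $H_i$ row sum from Lemma~\ref{H_similiar} by at most $4/q$, while row $n+t$ has deleted absolute row sum equal to just $2(q+t-1)/q^2 \le 4/q$. I would then check disjointness of the $2n$ Gershgorin discs in three cases. For the $K_1$-centred discs, consecutive centres are spaced by $2$ while the sum of any two radii is at most $8/q$, so disjointness holds whenever $q > 4$, which is guaranteed by $n \ge 4$ (giving $q \ge 7$). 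For the $H_i$-centred discs, Lemma~\ref{H_similiar} already shows disjointness with a strict slack: the critical inequality at rows $1$ and $2$ yields $D(1) - D(2) - 2R(1) = 2$, so adding $4/q$ to each radius consumes at most $8/q < 2$ of that slack and the discs remain disjoint for $q \ge 7$. Finally, cross-cluster disjointness is immediate: the closest $H_i$-centre to a $K_1$-centre is $D(1) = -q^2 - n + 1$ against $-2(q + n - 1)$, at distance $q^2 - 2q - n + 1$, vastly larger than the maximum sum of two radii, which is bounded by $n(n-1)/2 + 8/q$ for $n \ge 4$.

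With all $2n$ Gershgorin discs pairwise disjoint, Theorem~\ref{gershgorin} delivers $2n$ simple eigenvalues for each of $L_1$ and $L_2$. The main, and essentially only, obstacle is arithmetic bookkeeping: one must verify that the specific choice $q = \tfrac{n^2}{2} - 1$ is large enough to make the cross-block coupling $q^{-2} K_1$ a negligible perturbation of the separate Gershgorin analyses of $H_i$ and $K_1$. This is precisely why the Introduction insisted on this value of $q$, and the crucial bound $8/q < 2$ holds with room to spare since $q \ge 7$ whenever $n \ge 4$.
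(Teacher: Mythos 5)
Your proposal is correct and follows essentially the same route as the paper: apply Gershgorin's Disc Theorem directly to $L_1$ and $L_2$, treat the $H_i$-centred and $K_1$-centred discs as two clusters, verify intra-cluster disjointness (using the diagonal spacing from Lemma~\ref{H_similiar} and the spacing $2$ in $K_1$) and then cross-cluster disjointness via the gap of order $q^2$. The only difference is bookkeeping: you absorb the coupling block $q^{-2}K_1$ as a uniform perturbation bounded by $4/q$ per row and use the slack $D(1)-D(2)-2R(1)=2$, whereas the paper recomputes the perturbed row sums explicitly; both are sound.
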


\begin{proof}
We will deal with $L_{1}$ and $L_{2}$ one at a time. 

The case of $L_{1}$: We will use Gershgorin's Disc Theorem. First examine the discs due to $H_{1}$. Here are the diagonal entries again: 
\begin{align*}
D(1) &=  - q^{2} - n + 1, & D(2) &=  - q^{2} -2q - 3, \\
D(3) &=  - q^{2} - 4 q - 6, & D(t) &=  -(q + t - 1)^{2} - 1 \qquad  4\leq t \leq n.
\end{align*}
 As in the proof of Lemma \ref{H_similiar}, the diagonal entries are in descending order and the smallest gap is between $D(1)$ and $D(2)$: $$D(1) - D(2) = n^{2} - n + 2.$$ The row sums are much as in the proof of Lemma \ref{H_similiar}, but they each pick up a single term from the presence of $q^{-2}K$:\begin{align*}
R_{1}(L_{1}) &= \frac{1}{2}n(n-1) + \frac{2}{q}, & R_{2}(L_{1}) &= n-3 + \frac{2(q + 1)}{q^{2}}, \\
R_{3}(L_{1}) &= n + \frac{2(q + 2)}{q^{2}}, & R_{t}(L_{1}) &= n+t-3 + \frac{2(q + t - 1)}{q^{2}}, \qquad 4\leq t \leq n.
\end{align*} In the proof of Lemma \ref{H_similiar} we established that of the row sums $\lbrace R_{2}(H_{1}),...,R_{n}(H_{1}) \rbrace $ it is $R_{n}(H_{1})$ that is largest. This is still true, since each of the extra terms $$\lbrace 2/q, 2(q+1)/q^2,....,2(q+t-1)/q^2,... \rbrace $$ is larger than the previous one. Therefore let us compare $R_{1}(L_{1})$ and $R_{n}(L_{1})$: \begin{align*}
R_{1}(L_{1}) - R_{n}(L_{1}) &= \frac{1}{2}n(n-1) + \frac{2}{q} - 2n + 3 - \frac{2(q + n - 1)}{q^{2}} \\
&= (n-1)\left( \frac{n-4}{2} - \frac{2}{q^{2}} \right) + 1.
\end{align*} This expression increases monotonically with $n$. At $n = 4$, $q = 7$, and the expression evaluates to \begin{equation*}
R_{1}(L_{1}) - R_{n}(L_{1}) = 3\left( - \frac{2}{7^{2}} \right) + 1 = \frac{43}{49} > 0.
\end{equation*} This difference can only increase as $n$ grows larger, so we can say that for all $n \geq 4$, $R_{1}(L_{1})$ is the largest row sum. Now let us see how the Gershgorin disc associated with $R_{1}(L_{1})$ fits into the space between $D(1)$ and $D(2)$: $$
R_{1}(L_{1}) = \frac{1}{2}n(n-1) + \frac{2}{q}, \qquad
\frac{1}{2}(D(1) - D(2)) = \frac{1}{2} n(n-1) + 1.
$$ But since $q > 2$ for all $n \geq 4$, we can see that $R_{1}(L_{1}) < \frac{1}{2}(D(1) - D(2))$ for all $n \geq 4$. We have already shown that every subsequent row sum is smaller than $R_{1}(L_{1})$ and that every pair of diagonal entries is farther apart. This means that none of the Gershgorin discs can make it halfway across the space between each pair of diagonal entries, and so all of the Gershgorin discs due to $H_{1}$ and $q^{-2}K_{1}$ are disjoint.

Now consider the lower right block of $L_{1}$ equal to $K_{1}$. The diagonal entries are now: \begin{align*}
D(1+n) &=  -2q & D(2+n) &=  -2(q+1) \\
D(3+n) &=  -2(q+2) & D(t+n) &=  -2(q+t-1) \qquad  4\leq t \leq n.
\end{align*} The space between any two diagonal entries is exactly 2. The row sums come from the off-diagonal $q^{-2} K$ block: \begin{align*}
R_{n+1}(L_{1}) &=  \frac{2}{q} & R_{n+2}(L_{1}) &=  \frac{2(q+1)}{q^{2}} \\
R_{n+3}(L_{1}) &=  \frac{2(q+2)}{q^{2}} & R_{n+t}(L_{1}) &=  \frac{2(q+t-1)}{q^{2}} \qquad  4+n\leq t \leq 2n.
\end{align*} The largest of these row sums is $R_{2n}(L_{1}):$ 
\begin{align*}
R_{2n}(L_{1}) =  \frac{2(q+n-1)}{q^{2}} &< \frac{4((n-2)(n+2) + 2n)}{(n-2)^{2}(n+2)^{2}} \\
&= \frac{4n(n + 2)}{(n-2)^{2}(n+2)^{2}}  - \frac{16}{(n-2)^{2}(n+2)^{2}} \\
&< \frac{4n(n + 2)}{(n-2)^{2}(n+2)^{2}} = \frac{4n}{(n-2)^{2}(n+2)}.
\end{align*} Notice that $n < n + 2$ and $4 \leq (n-2)^{2}$ for $n \geq 4$, and therefore $\frac{4n}{(n-2)^{2}(n+2)} \leq 1$ for  $n \geq 4$. Therefore $R_{2n}(L_{1}) < 1$  for all $n \geq 4$, and so none of the Gershgorin discs centred on the diagonal entries $D(1+n),...,D(2n)$ can intersect with any of the other Gershgorin discs centred on those diagonal entries.

We have separated the Gershgorin disc into two groups, centered around $D(1),...,D(2)$ and $D(1+n),...,D(2n)$, and shown that these groups are disjoint amongst themselves. It remains to show that the Gershgorin discs centred around $D(2n)$ and $D(1)$ do not overlap, since these are  the elements of each group which approach each other the closest. The separation of these is \begin{equation*}
D(2n) - D(1) = -2(q+n-1) + q^{2} + n - 1 = q^{2} - 2q -(n +1),
\end{equation*} and the sum of their respective row sums is \begin{equation*}
R_{1}(L_{1}) + R_{2n}(L_{1})  = \frac{1}{2}n(n-1) + \frac{2}{q} + \frac{2(q+n-1)}{q^{2}} = \frac{q^{2}n(n+1) + 8q + 4n - 4}{2
q^{2}}.
\end{equation*} The difference $D(2n) - D(1) - (R_{1}(L_{1}) + R_{2n}(L_{1}))$ is easily shown to be positive for all $n \geq 4$. Thus we see that the combined radii of the Gershgorin discs centred around $D(2n)$ and $D(1)$ is not large enough to cover the distance between them, for all $n \geq 4$.

Therefore all of the Gershgorin discs of $L_{1}$ are disjoint, and so $L_{1}$ has only simple eigenvalues.

The case for $L_{2}$: This is very much the same procedure as before, except there is a difference in the first three row sums $R_{1}(L_{2}),...,R_{3}(L_{2})$ due to the difference between $H_{1}$ and $H_{2}$. This difference does not affect the outcome, and the calculations are almost exactly the same so we will omit them here.
\end{proof}

We have shown that $L_{1}$ and $L_{2}$, the two diagonal components of $L_{(0)}$, have only simple eigenvalues. The eigenvalues of $L$ are the union of the eigenvalues of $L_{1}$ and $L_{2}$. Therefore, any repeated eigenvalues of $L$ must be eigenvalues which are shared between $L_{1}$ and $L_{2}$. The following theorem shows that there must be at least some eigenvalues which are not shared between $L_{1}$ and $L_{2}$, and which are therefore simple eigenvalues of $L_{(0)}$ as a whole.

\begin{theorem}\label{L_simple_sym}
$L_{(0)}$ has at least six simple eigenvalues.
\end{theorem}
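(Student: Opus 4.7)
The plan is to use the elementary symmetric polynomials $e_k$ in the eigenvalues to bound how many eigenvalues $L_1$ and $L_2$ can share.  After the symmetric permutations above, $L_{(0)} = \mbox{diag}(L_1, L_2)$, and each $L_i$ has only simple eigenvalues by Lemma \ref{L_simple}; hence every eigenvalue of $L_{(0)}$ has multiplicity $1$ or $2$, with multiplicity $2$ exactly when shared between the two spectra.  Letting $m$ denote the number of eigenvalues of $L_1$ missing from the spectrum of $L_2$ (equal to the count of eigenvalues of $L_2$ missing from $L_1$, since both have $2n$ eigenvalues), the simple eigenvalues of $L_{(0)}$ number $2m$, so it suffices to show $m \geq 3$.

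First I would compute the $e_1$ and $e_2$ differences via Lemma \ref{lemma_elementary_symmetric_polys}.  Since $L_1$ and $L_2$ share a common diagonal, $e_1(L_1) = e_1(L_2)$.  For $e_2$, only the three $2 \times 2$ principal submatrices indexed by two-element subsets of $\{1,2,3\}$ differ, and Lemma \ref{off_diag_lemma} gives contributions $4$, $-8$, $4$ which sum to zero.  Writing $\lambda_1, \ldots, \lambda_m$ and $\mu_1, \ldots, \mu_m$ for the unshared eigenvalues of $L_1$ and $L_2$, the shared parts cancel so these equalities descend to $e_1(\lambda) = e_1(\mu)$ and $e_2(\lambda) = e_2(\mu)$ as symmetric polynomials in $m$ variables.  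This rules out $m = 1$ (which would force $\lambda_1 = \mu_1$) and $m = 2$ (which would force $\{\lambda_1, \lambda_2\} = \{\mu_1, \mu_2\}$), each contradicting the unsharedness assumption.

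To exclude $m = 0$ (cospectrality of $L_1$ and $L_2$), I would leverage the nonvanishing $\det H_1\{1,2,3\} - \det H_2\{1,2,3\} = -4(n-3) \neq 0$ from the preceding Proposition.  Via the Schur complement $\det L_i = \det(K_1) \cdot \det(H_i - q^{-4}K_1)$ followed by a second Schur complement on the $\{1,2,3\}$-block of $H_i - q^{-4}K_1$, the cospectrality question reduces to a $3 \times 3$ determinantal comparison $\det S_1 = \det S_2$ with $S_1 - S_2 = H_1\{1,2,3\} - H_2\{1,2,3\} =: \Delta_3$; applying the $3 \times 3$ identity $\det S_1 - \det(S_1 - \Delta_3) = \mbox{tr}(\mbox{adj}(S_1)\Delta_3) - \mbox{tr}(\mbox{adj}(\Delta_3)S_1) + \det \Delta_3$ with $\det \Delta_3 = -16$ then isolates a nonvanishing contribution driven by the Proposition's discrepancy.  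The principal obstacle is this final step: the strong cancellations already observed for $e_1$ and $e_2$ (and continuing into $e_3$ by direct calculation) suggest that care is needed to certify the $-4(n-3)$ input is not absorbed by further cancellation when tracked through the two Schur reductions.  Combining all three parts gives $m \geq 3$, so $L_{(0)}$ has at least $2m \geq 6$ simple eigenvalues.
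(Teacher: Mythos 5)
Your reduction to showing $m \geq 3$, and the use of $e_{1}$ and $e_{2}$ (via Lemma \ref{lemma_elementary_symmetric_polys} and Lemma \ref{off_diag_lemma}) to rule out $m = 1$ and $m = 2$, is exactly the counting argument the paper uses at the end of its proof. The genuine gap is at the step you yourself flag as the ``principal obstacle'': you never actually prove that $L_{1}$ and $L_{2}$ fail to be cospectral, i.e.\ that $m \neq 0$, and this is the heart of the theorem. Your proposed route — comparing $\det L_{1}$ with $\det L_{2}$ via two Schur complements and the $3\times 3$ identity in $\mathrm{tr}(\mathrm{adj}(S_{1})\Delta_{3})$ — is only a sketch, and there is no a priori reason it succeeds. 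The Proposition's discrepancy $-4(n-3)$ concerns the bare blocks $H_{1}\lbrace 1,2,3\rbrace$ and $H_{2}\lbrace 1,2,3\rbrace$; after the Schur reduction the relevant matrices are $S_{i} = A_{i} - BD^{-1}B^{T}$, and the adjugate and trace terms in your identity involve $BD^{-1}B^{T}$, so the $-4(n-3)$ does not transfer automatically. Indeed the pattern already visible in the problem is that such discrepancies \emph{do} get absorbed: the per-minor differences $4,-8,4$ cancel in $e_{2}$, the $e_{3}$ differences cancel as well, and when the paper finally finds a nonzero difference it is at $e_{4}$, where every $n$-dependent term cancels and only $-32/q^{4}$ survives. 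Since $\det L_{i}$ is the top symmetric function $e_{2n}$, asserting that it distinguishes the two spectra requires an explicit computation you have not done, and it may simply be false (the paper never claims it).

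By contrast, the paper's proof spends essentially all of its length on exactly this missing step: a case-by-case analysis of the $4\times 4$ principal minors (organised by how many anchor points fall in the sectors $P$, $Q$, $R$, using Lemma \ref{off_diag_lemma} and Lemma \ref{lemma_block_det}) to show $e_{4}(L_{1}) - e_{4}(L_{2}) = -32/q^{4} \neq 0$, which certifies that the spectra differ; only then does the $e_{1},e_{2}$ counting argument you reproduce upgrade ``differ somewhere'' to ``differ in at least three eigenvalues''. To complete your proposal you would need either to carry out and verify the determinant comparison for all $n \geq 4$ (not merely exhibit an identity into which it could be fed), or to replace it with a computation of some symmetric function that provably differs — which is precisely the $e_{4}$ computation the paper performs.
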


\begin{proof}
Recall that $L_{(0)} = \mbox{diag}(L_{1},L_{2})$. Denote the eigenvalues of $L_{1}$ by $$\lambda_{1}, \hdots , \lambda_{2n}, $$ and the eigenvalues of $L_{2}$ by $$\rho_{1}, \hdots , \rho_{2n}. $$ If the eigenvalues were exactly the same, then all symmetric polynomials involving the eigenvalues would also be the same. We will compare the elementary symmetric polynomials of $L_{1}$ and $L_{2}$. 

First symmetric polynomial of eigenvalues:
this is simply the trace. $L_{1}$ and $L_{2}$ share the same trace, since $H_{1}$ differs from $H_{2}$ only in off-diagonal elements, and so we can conclude that $$e_{1}(L_{1}) = \sum_{i = 1}^{2n} \lambda_{i} =  \sum_{i = 1}^{2n} \rho_{i} = e_{1}(L_{2}).$$

Second symmetric polynomial of eigenvalues:
the second symmetric polynomial of the eigenvalues of a matrix $M$ is equal to the sums of symmetric $2 \times 2$ minors of $M$: $$e_{2}(M) = \sum_{i < j} m(i,j), $$ where $m(i,j)$ is the minor formed by deleting all rows and columns except numbers $i$ and $j$. Let us therefore compare such minors of $L_{1}$ and $L_{2}$. Keep in mind that $L_{1}$ and $L_{2}$ are the same everywhere except for the $3 \times 3$ block in the upper left: $$L_{1} = {\small \left( \begin{array}{cccc}
-q^{2} - n + 1 & 0 & -3 & \hdots\\
0 & -q^{2} - 2q - 3 & 0 \\
-3 & 0 & -q^{2} - 4q - 6 \\
\vdots & & & \ddots
\end{array} \right)},$$
$$L_{2} = {\small \left( \begin{array}{cccc}
-q^{2} - n + 1 & -2 & -1 & \hdots\\
-2 & -q^{2} - 2q - 3 & -2 \\
-1 & -2 & -q^{2} - 4q - 6 \\
\vdots & & & \ddots
\end{array} \right)}.$$ Let us now examine the possible values of $l_{1}(i,j)$ and $l_{2}(i,j)$, the $2 \times 2$ minors of $L_{1}$ and $L_{2}$. Since $L_{1}$ and $L_{2}$ differ only in the first three rows and columns, $$l_{1}(i,j) = l_{2}(i,j) \qquad \mbox{ if } i,j > 3. $$ Therefore when considering the difference between $e_{2}(L_{1})$ and $e_{2}(L_{2})$ we need to consider only those minors for which $i \leq 3$. 

Consider now $i < 3, j \geq 3$. There is now one anchor point in the first three diagonal entries of $L_{1}$, and one anchor point elsewhere. Deleting all other rows and columns for the construction of the minor will invariably delete the off-diagonal entries of the first $3 \times 3$ block. Therefore $l_{1}(i,j) = l_{2}(i,j) $ for $ i \leq 3,j > 3. $

We need therefore only consider those minors which are obtained from the top-left $3 \times 3$ block. We now have {\small \begin{align*}
e_{2}(L_{1}) = \; &\mbox{Det}\left( \begin{array}{cc}
-q^{2} - n + 1 & 0\\
0 & -q^{2} - 2q -3
\end{array} \right) + \mbox{Det}\left( \begin{array}{cc}
-q^{2} - n + 1 & -3\\
-3 & -q^{2} - 4q -6
\end{array} \right) \\
&+ \mbox{Det}\left( \begin{array}{cc}
-q^{2} -4q -6 & 0\\
0 & -q^{2} - 2q -3
\end{array} \right) + (\mbox{Shared terms}),
\end{align*}
\begin{align*}
e_{2}(L_{2}) = \; &\mbox{Det}\left( \begin{array}{cc}
-q^{2} - n + 1 & -2\\
-2 & -q^{2} - 2q -3
\end{array} \right) + \mbox{Det}\left( \begin{array}{cc}
-q^{2} - n + 1 & -1\\
-1 & -q^{2} - 4q -6
\end{array} \right) \\
&+ \mbox{Det}\left( \begin{array}{cc}
-q^{2} -4q -6 & -2\\
-2 & -q^{2} - 2q -3
\end{array} \right) + (\mbox{Shared terms}),
\end{align*}}where we have grouped the terms arising from minors anchored at diagonal entries outside the first three into a `shared terms' group. Upon evaluating these three determinants, we will get terms in $q$, and three constant terms. In both cases these constant terms sum to $-9$, and so we have $$e_{2}(L_{1}) = -9 + (\mbox{Shared terms}) = e_{2}(L_{2}). $$ Therefore the second symmetric polynomial of eigenvalues is equal for both $L_{1}$ and $L_{2}$.

Third symmetric polynomial of eigenvalues:
we do not require this one for our purposes, so for brevity it will be skipped. Using similar reasoning to the case of the second symmetric polynomial, it can be shown that the third symmetric polynomial of eigenvalues is indeed the same for both $L_{1}$ and $L_{2}$.

Fourth symmetric polynomial of eigenvalues:
now we are choosing a sequence of indices $i < j < k < m$ and using these these to construct the symmetric $4 \times 4$ minors of $L_{1}$ and $L_{2}$. Many of these minors will be shared between the two matrices, and we can avoid these when considering the difference in the overall sums. The possible choices of $i,j,k,m$ fall into categories, which we will list and consider in turn. First, split $L_{1}$ and $L_{2}$ into sectors, as follows: $$L_{t} = \left( \begin{array}{ccc}
\multicolumn{1}{c|} P & \multicolumn{1}{c|} \, & \, \\ \cline{1-1}
\, & \multicolumn{1}{c|} Q & \, \\ \cline{1-2}
\, & \, & R \\
\end{array} \right). $$ Here $P$ represents the first $3 \times 3$ block of $H_{1}$ and $H_{2}$, which is the only point of difference between $L_{1}$ and $L_{2}$. To save space, use the following notation for the diagonal entries of $P$:\begin{equation*}
P_{1} = -q^{2} - n + 1, \quad  P_{2} = -q^{2} - 2q - 3, \quad P_{3} = -q^{2} - 4q - 6. 
\end{equation*}  $Q$ represents the rest of $H_{1}$ and $H_{2}$. $R$ represents the rest of $L_{1}$ and $L_{2}$, consisting of $K$ in the bottom right corner, and the two off-diagonal copies of $q^{-2} K$. We will now look through the possibilities for constructing $4 \times 4$ minors, and identify those which have the potential to be different between $L_{1}$ and $L_{2}$.

Case 1:  $i, j , k, m$ are all drawn from $Q$ and $R$. 

Since $L_{1}$ and $L_{2}$ are identical in these sectors, minors which result from the choices of $i, j , k, m$ from these two sectors alone can be disregarded as `shared terms' when comparing $e_{4}(L_{1})$ and $e_{4}(L_{2})$. The conclusion here is that at least one of the anchor points must be drawn from $P$, a fact we will refer to again.

Case 2: $i, j , k, m$ are all drawn from $P$ and $Q$.

Since $P$ and $Q$ together make up $H_{1}$ or $H_{2}$, choosing $i, j , k, m$ from these sectors gives a sum of minors that is equal to $e_{4}(H_{1})$ or $e_{4}(H_{2})$. But we already know that $H_{1}$ and $H_{2}$ have the same eigenvalues, and so $e_{4}(H_{1}) = e_{4}(H_{2})$. Therefore minors which result from the choices of $i, j , k, m$ from these two sectors alone can be disregarded as shared terms. Therefore at least one of the anchor points must be drawn from $R$.

Case 3: $i$ drawn from $P$, and $j, k, m$ are drawn from $Q$ and $R$.

In this case none of the off-diagonal entries in $P$ are drawn into the sub-matrix. Since these are the only point of difference between $L_{1}$ and $L_{2}$, the resulting minors are shared between $e_{4}(L_{1})$ and $e_{4}(L_{2})$. Therefore at least two anchor points must come from $P$.

Case 4: $i, j$ from $P$, and $k, m$ from $R$.

 Each choice of $k$ and $m$, giving anchor points $K_{k}$ and $K_{m}$ from $R$ will result in $2 \times 2$ minors drawn from the following submatrix: $${\small \left( \begin{array}{ccc|cc}
P_{1} & (0 \mbox{ or } -2) &  (-3 \mbox{ or } -1) & \,  \, \\
(0 \mbox{ or } -2) & P_{2} & (0 \mbox{ or } -2) & r_{k} & r_{m} \\
(-3 \mbox{ or } -1) & (0 \mbox{ or } -2) & P_{3} & \, & \, \\ \hline
\, & r_{k}^{T} & \, & K_{k} & \, \\
\, & r_{m}^{T} & \, & \, &  K_{m}
\end{array}  \right)}, $$ where $r_{1}$ and $r_{2}$ represent the first three rows of the columns corresponding to $k$ and $m$. There is limited freedom in the entries of $r_{k}$ and $r_{m}$. There can be at most one non-zero entry in each one, equal to $q^{-2} K_{k}$ or $q^{-2}K_{m}$ respectively, if $k$ or $m \leq 3 + n$, and any nonzero entry in $r_{m}$ must be lower in the column than a corresponding non-zero entry in $r_{k}$. This means that it is possible, depending on the choice of $k$ and $m$ for there to be a non-zero entry in $r_{k}$ and only zero entries in $r_{m}$, but not vice-versa. All of this follows because the $R$ sector is made up entirely of multiples of the matrix $K_{1}$. Examine each of the cases in turn. Firstly, the case for all zeros: $${\small \left( \begin{array}{ccc|cc}
P_{1} & (0 \mbox{ or } -2) &  (-3 \mbox{ or } -1) & 0 & 0 \\
(0 \mbox{ or } -2) & P_{2} & (0 \mbox{ or } -2) & 0 & 0 \\
(-3 \mbox{ or } -1) & (0 \mbox{ or } -2) & P_{3} & 0 & 0 \\ \hline
0 & 0 & 0 & K_{k} & \, \\
0 & 0 & 0 & \, &  K_{m}
\end{array}  \right)}. $$ Since this matrix is block diagonal, extracting the $4 \times 4$ minors as required will lead to $$K_{k}K_{m}(2 \times 2 \mbox{ sum of minors obtained from } P ). $$ But we already know from the analysis of $e_{2}$ that these minors will be the same for $L_{1}$ and $L_{2}$, and so the $4 \times 4$ minors are the same between $L_{1}$ and $L_{2}$ in this case.

Now the case for non-zero entries in both columns. Consider first the following arrangement: $${\small \left( \begin{array}{ccc|cc}
P_{1} & (0 \mbox{ or } -2) &  (-3 \mbox{ or } -1) & q^{-2}K_{k} & 0 \\
(0 \mbox{ or } -2) & P_{2} & (0 \mbox{ or } -2) & 0 & q^{-2}K_{m} \\
(-3 \mbox{ or } -1) & (0 \mbox{ or } -2) & P_{3} & 0 & 0 \\ \hline
q^{-2}K_{k} & 0 & 0 & K_{k} & \, \\
0 &  q^{-2}K_{m} & 0 & \, &  K_{m}
\end{array}  \right)}. $$ This arrangement provides three choices of $4 \times 4$ minors in keeping with our requirements, and their sum is: {\footnotesize \begin{align*}
&\left| \begin{array}{cc;{6pt/4pt}cc}
P_{1} & (0 \mbox{ or } -2) & q^{-2}K_{k} & 0 \\
(0 \mbox{ or } -2) & P_{2} & 0 & q^{-2}K_{m} \\  \hdashline[6pt/4pt]
q^{-2}K_{k} & 0 & K_{k} & \, \\
0 &  q^{-2}K_{m} & \, &  K_{m}
\end{array} \right| + \left| \begin{array}{cc;{6pt/4pt}cc}
P_{2} & (0 \mbox{ or } -2) & 0 & q^{-2}K_{m} \\ 
(0 \mbox{ or } -2) & P_{3} & 0 & 0 \\  \hdashline[6pt/4pt]
0 & 0 & K_{k} & \, \\
q^{-2}K_{m} &  0 & \, &  K_{m}
\end{array} \right| \\
+ &\left| \begin{array}{cc;{6pt/4pt}cc}
P_{1} & (-3 \mbox{ or } -1) & q^{-2}K_{k} & 0 \\
(-3 \mbox{ or } -1) & P_{3} & 0 & 0 \\  \hdashline[6pt/4pt]
q^{-2}K_{k} & 0 & K_{k} & \, \\
0 &  0 & \, &  K_{m}
\end{array} \right|.
\end{align*} } Apply the following symmetric permutation of submatrices to the second minor:
$$ \left| \begin{array}{cc;{6pt/4pt}cc}
P_{2} & (0 \mbox{ or } -2) & 0 & q^{-2}K_{m} \\ 
(0 \mbox{ or } -2) & P_{3} & 0 & 0 \\  \hdashline[6pt/4pt]
0 & 0 & K_{k} & \, \\
q^{-2}K_{m} &  0 & \, &  K_{m}
\end{array} \right| \rightarrow \left| \begin{array}{cc;{6pt/4pt}cc}
P_{2} & (0 \mbox{ or } -2) & q^{-2}K_{m} & 0 \\ 
(0 \mbox{ or } -2) & P_{3} & 0 & 0 \\  \hdashline[6pt/4pt]
q^{-2}K_{m} & 0 & K_{m} & \, \\
0 &  0 & \, &  K_{k}
\end{array} \right|. $$ Because symmetrically permuting entries of a matrix does not change its determinant, we have not changed the value of the minor with this manipulation. Now, we will have a situation where the off-diagonal blocks of each minor commute with the bottom right block of that minor, and so we can use the following identity of block matrices, given as Theorem 3 of \cite{block_matrix_dets}:
\begin{lemma}\label{lemma_block_det}
Let $M, N, E,$ and $F$ be square matrices of equal size, where $E$ and $F$ commute. Then
 $$\left| \begin{array}{cc}
M & N \\
E & F
\end{array} \right| = \left|MF - NE \right|.$$
\end{lemma}

 The product of all of the off diagonal blocks will be a diagonal matrix, and so we will end up with an expression of the form {\small \begin{align*}
&\left| \begin{array}{cc}
\mbox{terms} & K_{k}(0 \mbox{ or } -2) \\
 K_{m}(0 \mbox{ or } -2) & \mbox{terms}
\end{array} \right| + \left| \begin{array}{cc}
\mbox{terms} & K_{m}(0 \mbox{ or } -2) \\
 K_{k}(0 \mbox{ or } -2) & \mbox{terms}
\end{array} \right| \\
+ &\left| \begin{array}{cc}
\mbox{terms} & K_{k}(-3 \mbox{ or } -1) \\
 K_{m}(-3 \mbox{ or } -1) & \mbox{terms}
\end{array} \right|.
\end{align*} }The only room for a difference to emerge in this expression between $L_{1}$ and $L_{2}$ is in the off diagonal terms. For $L_{1}$ these become $$-9K_{k}K_{m}$$ and for $L_{2}$ these become $$K_{k}K_{m}(-4-4-1) = -9K_{k}K_{m},$$ and so the terms are the same for both $L_{1}$ and $L_{2}$. This is a common feature of what we are doing here. Many expressions reduce down to the same set of $2 \times 2$ minors coming from $P$, with the same useful equality property emerging from the off-diagonals. The remaining two possibilities for non-zero entries in both columns of $R$, shown here, 
$${\small \left( \begin{array}{ccc|cc}
P_{1} & (0 \mbox{ or } -2) &  (-3 \mbox{ or } -1) & q^{-2}K_{k} & 0 \\
(0 \mbox{ or } -2) & P_{2} & (0 \mbox{ or } -2) & 0 & 0 \\
(-3 \mbox{ or } -1) & (0 \mbox{ or } -2) & P_{3} & 0 & q^{-2}K_{m} \\ \hline
q^{-2}K_{k} & 0 & 0 & K_{k} & \, \\
0 &  0 & q^{-2}K_{m} & \, &  K_{m}
\end{array}  \right)}, $$ $$ {\small \left( \begin{array}{ccc|cc}
P_{1} & (0 \mbox{ or } -2) &  (-3 \mbox{ or } -1) & 0 & 0 \\
(0 \mbox{ or } -2) & P_{2} & (0 \mbox{ or } -2) & q^{-2}K_{k} & 0 \\
(-3 \mbox{ or } -1) & (0 \mbox{ or } -2) & P_{3} & 0 & q^{-2}K_{m} \\ \hline
0 & q^{-2}K_{k} & 0 & K_{k} & \, \\
0 &  0 & q^{-2}K_{m} & \, &  K_{m}
\end{array}  \right)}, $$
work out in the same way, and will not be repeated. 

We are left with the possibility of a non-zero entry in the first column of $R$ and zeros in the second. Consider first the following arrangement: $${\small \left( \begin{array}{ccc|cc}
P_{1} & (0 \mbox{ or } -2) &  (-3 \mbox{ or } -1) & q^{-2}K_{k} & 0 \\
(0 \mbox{ or } -2) & P_{2} & (0 \mbox{ or } -2) & 0 & 0 \\
(-3 \mbox{ or } -1) & (0 \mbox{ or } -2) & P_{3} & 0 & 0 \\ \hline
q^{-2}K_{k} & 0 & 0 & K_{k} & \, \\
0 &  0 & 0 & \, &  K_{m}
\end{array}  \right)}. $$ Same procedure as in the previous step: {\footnotesize \begin{align*}
&\left| \begin{array}{cc;{6pt/4pt}cc}
P_{1} & (0 \mbox{ or } -2) & q^{-2}K_{k} & 0 \\
(0 \mbox{ or } -2) & P_{2} & 0 & 0 \\  \hdashline[6pt/4pt]
q^{-2}K_{k} & 0 & K_{k} & \, \\
0 &  0 & \, &  K_{m}
\end{array} \right| + \left| \begin{array}{cc;{6pt/4pt}cc}
P_{2} & (0 \mbox{ or } -2) & 0 & 0 \\ 
(0 \mbox{ or } -2) & P_{3} & 0 & 0 \\  \hdashline[6pt/4pt]
0 & 0 & K_{k} & \, \\
0 &  0 & \, &  K_{m}
\end{array} \right| \\
+ &\left| \begin{array}{cc;{6pt/4pt}cc}
P_{1} & (-3 \mbox{ or } -1) & q^{-2}K_{k} & 0 \\
(-3 \mbox{ or } -1) & P_{3} & 0 & 0 \\  \hdashline[6pt/4pt]
q^{-2}K_{k} & 0 & K_{k} & \, \\
0 &  0 & \, &  K_{m}
\end{array} \right|,
\end{align*} }again leading to an expression of the form \begin{align*}
&\left| \begin{array}{cc}
\mbox{terms} & K_{k}(0 \mbox{ or } -2) \\
 K_{m}(0 \mbox{ or } -2) & \mbox{terms}
\end{array} \right| + \left| \begin{array}{cc}
\mbox{terms} & K_{m}(0 \mbox{ or } -2) \\
 K_{k}(0 \mbox{ or } -2) & \mbox{terms}
\end{array} \right| \\
+ &\left| \begin{array}{cc}
\mbox{terms} & K_{k}(-3 \mbox{ or } -1) \\
 K_{m}(-3 \mbox{ or } -1) & \mbox{terms}
\end{array} \right|.
\end{align*} The remaining two possibilities for one non-zero column of $R$, $${\small \left( \begin{array}{ccc|cc}
P_{1} & (0 \mbox{ or } -2) &  (-3 \mbox{ or } -1) & 0 & 0 \\
(0 \mbox{ or } -2) & P_{2} & (0 \mbox{ or } -2) & q^{-2}K_{k} & 0 \\
(-3 \mbox{ or } -1) & (0 \mbox{ or } -2) & P_{3} & 0 & 0 \\ \hline
0 & q^{-2}K_{k} & 0 & K_{k} & \, \\
0 &  0 & 0 & \, &  K_{m}
\end{array}  \right)}, $$ $$ {\small \left( \begin{array}{ccc|cc}
P_{1} & (0 \mbox{ or } -2) &  (-3 \mbox{ or } -1) & 0 & 0 \\
(0 \mbox{ or } -2) & P_{2} & (0 \mbox{ or } -2) & 0 & 0 \\
(-3 \mbox{ or } -1) & (0 \mbox{ or } -2) & P_{3} & q^{-2}K_{k} & 0 \\ \hline
0 & 0 & q^{-2}K_{k} & K_{k} & \, \\
0 &  0 & 0 & \, &  K_{m}
\end{array}  \right)} $$ can be dealt with in the same way. The same conclusion follows as for the previous arrangements, and we see that there can be no difference between the minors of $L_{1}$ and $L_{2}$ constructed in this fashion. Therefore we can conclude that either the anchor points are of the form $i,j \in P$, $k \in Q$, $m \in R$, or of the form $i,j,k \in P$, $m \in R$.

Case 5: $i,j$ from $P$, $k$ from $P$ or $Q$, and $m$ from $R$.

Since $m$ is now definitely from $R$, we must have that $m > n$. To simplify the remaining steps, we will split the indexing of anchor points into indices $1,...,n$ for $i,j,k$ in $P$ and $Q$, and separate indices $1,...,n$ for $m$ in $R$. This amounts to redefining $m$ by $m \leftarrow m - n $. We will proceed by fixing $m$ and evaluating the possible minors from all other valid choices of $i,j$ and $k$. We will go through every $m$ from $1$ to $n$ (using the new indexing system) and collect the sum of the differences. First consider the case where $m > 3$. Consider the minor formed by $i,j,k \in P$: $${\small \left| \begin{array}{ccc|c}
-q^{2} - n + 1 & (0 \mbox{ or } -2) & (-3 \mbox{ or } -1) & 0 \\
 (0 \mbox{ or } -2) & -q^{2} - 2q - 3 &  (0 \mbox{ or } -2) & 0 \\
 (-3 \mbox{ or } -1) & (0 \mbox{ or } -2) & -q^{2} -4q - 6 & 0 \\ \hline
 0 & 0 & 0 & -2(q + m -1)
\end{array} \right|}. $$ Since $m > 3$, there are only zeros in the final row and column. Remembering that $q = (1/2)(n-2)(n+2) + 1$, we can directly evaluate this term, and we find that the difference due to this term is \begin{equation*}
\Delta(1,2,3,m) = -4(n-3)(-2(q + m -1)) = 4(n-3)(n+2)(n-2) + 8m(n-3).
\end{equation*} Now suppose that we are taking $k \in Q$. We shall fix $k$, and evaluate each of $\Delta(1,2,k,m)$, $\Delta(1,3,k,m)$, and $\Delta(2,3,k,m)$, and sum over $k$. Each minor will be of the form $${\small \left| \begin{array}{cc|c|c}
2 \times 2 & \mbox{submatrices} & (-k+1 \mbox{ or } -1) & 0 \\
 \mbox{of} & P &  -1& 0 \\ \hline
(-k+1 \mbox{ or } -1) & -1 & -(q+k-1)^{2}-1 & c \\ \hline
 0 & 0 & c & -2(q + m -1)
\end{array} \right|}, $$ where the top left block represents the three $2 \times 2$ submatrices of $P$, and $c$ can either be $0$ if $m-k \neq 0$, or $-2q^{-2}(q + m -1)$ if $m-k = 0$. This non-zero entry occurs when the row (column) originating from $k$ lines up with a non-zero entry of the top right block of $L_{1}$ (or $L_{2}$). Since $m > 3$, such lining up cannot occur from any of the rows (columns) due to $i$ or $j$, and so the remaining two entries in the fourth row and column must be zero.  For clarity, let us express the differences as follows: {\small \begin{align*}
\Delta(1,2,k,m) &= \left| \begin{array}{cc|c|c}
p_{1} & 0 & 1-k & 0 \\
0 & p_{2} & -1 & 0 \\ \hline
1-k & -1 & t & c \\ \hline
0 & 0 & c & s
\end{array} \right| - \left| \begin{array}{cc|c|c}
p_{1} & -2 & 1-k & 0 \\
-2 & p_{2} & -1 & 0 \\ \hline
1-k & -1 & t & c \\ \hline
0 & 0 & c & s
\end{array} \right|, \\
\Delta(1,3,k,m) &= \left| \begin{array}{cc|c|c}
p_{1} & -3 & 1-k & 0 \\
-3 & p_{3} & -1 & 0 \\ \hline
1-k & -1 & t & c \\ \hline
0 & 0 & c & s
\end{array} \right| - \left| \begin{array}{cc|c|c}
p_{1} & -1 & 1-k & 0 \\
-1 & p_{3} & -1 & 0 \\ \hline
1-k & -1 & t & c \\ \hline
0 & 0 & c & s
\end{array} \right|, \\
\Delta(2,3,k,m) &= \left| \begin{array}{cc|c|c}
p_{2} & 0 & -1 & 0 \\
0 & p_{3} & -1 & 0 \\ \hline
-1 & -1 & t & c \\ \hline
0 & 0 & c & s
\end{array} \right| - \left| \begin{array}{cc|c|c}
p_{2} & -2 & -1 & 0 \\
-2 & p_{3} & -1 & 0 \\ \hline
-1 & -1 & t & c \\ \hline
0 & 0 & c & s
\end{array} \right|,
\end{align*}} where $s = -2(q + m -1)$, $t = -(q+k-1)^{2}-1$, and $p_{1}, p_{2}, p_{3}$ are  the diagonal entries of $P$. Consider the Laplace expansion of these minors down the rightmost column, making use of the zero entries. In this expansion of each pair of minors, only those terms which involve both the entries at the (1,2) and (2,1) positions will differ between these two expansions. This is due to the presence of zeroes in the right column and bottom row, forcing many of the Laplacian expansion terms to equal zero. Therefore by using Lemma \ref{off_diag_lemma} to assist with the the right-column Laplacian expansions we can evaluate: \pagebreak \begin{align*}
\Delta(1,2,k,m) &= -4 \left(c^2-s (k+t-1)\right) \\
\Delta(1,3,k,m) &= 4 \left(2 c^2-s (k+2 t-1)\right) \\
\Delta(2,3,k,m) &= 4 \left(-c^2+s t+s\right),
\end{align*} which sums to $4s = -8(q + m -1)$. The contribution to $\Delta(i,j,k,m)$ for $i,j \in P$, $k \in Q$, and $m \in R, m > 3$ for a fixed $m$ and fixed $k$ is $-8(q + m -1)$. Summing this over $k \in Q$, we simply multiply by $(n-3)$ to get $$\sum_{\substack{i,j = 1 \\ i \neq j}}^{3}\sum_{k > 3} \Delta(i,j,k,m)= -8(n-3)(q + m -1)  = -4(n-3)(n+2)(n-2) - 8m(n-3).$$ But this is equal to the negative of $\Delta(1,2,3,m)$ which we obtained earlier, cancelling out. Keep in mind that this sum is for a fixed $m$. So we have $$\sum_{m > 3} \sum_{k = 1}^{n}\Delta(i,j,k,m) = 0. $$

 Now we shall turn our attention to the remaining case, which is $m \leq 3$. Again fix $k$, and consider $m = 1$. First, consider $\Delta(1,2,3,1) $: {\small \begin{align*}
 \Delta(1,2,3,1) &=  \left| \begin{array}{ccc|c}
-q^{2} - n + 1 & 0 & -3 & -2q^{-1} \\
 0 & -q^{2} - 2q - 3 &  0 & 0 \\
 -3 & 0 & -q^{2} -4q - 6 & 0 \\ \hline
 -2q^{-1} & 0 & 0 & -2q
\end{array} \right| \\
&-  \left| \begin{array}{ccc|c}
-q^{2} - n + 1 & -2 & -1 & -2q^{-1} \\
-2 & -q^{2} - 2q - 3 &  -2 & 0 \\
 -1 & -2 & -q^{2} -4q - 6 & 0 \\ \hline
 -2q^{-1} & 0 & 0 & -2q
\end{array} \right|,
 \end{align*} }which, repeatedly using Lemma \ref{off_diag_lemma} and expanding only those terms which do not cancel out, gives us \begin{equation}\label{eq_diff_1}
 \Delta(1,2,3,1) =\frac{8 \left((n-3) q^3-2\right)}{q^2}
 \end{equation} Likewise, by direct calculation, \begin{align}\label{eq_diff_2}
 \Delta(1,2,3,2) &= \frac{8 (q+1) \left((n-3) q^4+4 q+4\right)}{q^4}, \\
 \Delta(1,2,3,3) &= \frac{8 (q+2) \left((n-3) q^4-2 q-4\right)}{q^4}.\label{eq_diff_3}
 \end{align} Summing these, we have that \begin{align}\label{eq_diff_sum}
 \sum_{m=1}^{3}\Delta(1,2,3,m) &=  8 \left(3 n (q+1)-\frac{4}{q^4}-9 q-9\right).
\end{align} Now let us examine the case for a fixed $k > 3$ and a fixed $m <3$. Start with $m = 1,$ and use Lemma \ref{off_diag_lemma} to evaluate the differences: {\small \begin{align*}
\Delta(1,2,k,1) &= \left| \begin{array}{cc|c|c}
p_{1} & 0 & 1-k & c \\
0 & p_{2} & -1 & 0 \\ \hline
1-k & -1 & t & 0 \\ \hline
c & 0 & 0 & s
\end{array} \right| - \left| \begin{array}{cc|c|c}
p_{1} & -2 & 1-k & c \\
-2 & p_{2} & -1 & 0 \\ \hline
1-k & -1 & t & 0 \\ \hline
c & 0 & 0 & s
\end{array} \right| = 4 s (-1 + k + t), \\
\Delta(1,3,k,1) &= \left| \begin{array}{cc|c|c}
p_{1} & -3 & 1-k & c \\
-3 & p_{3} & -1 & 0 \\ \hline
1-k & -1 & t & 0 \\ \hline
c & 0 & 0 & s
\end{array} \right| - \left| \begin{array}{cc|c|c}
p_{1} & -1 & 1-k & c \\
-1 & p_{3} & -1 & 0 \\ \hline
1-k & -1 & t & 0 \\ \hline
c & 0 & 0 & s
\end{array} \right| = -4 s (-1 + k + 2 t),\\
\Delta(2,3,k,1) &= \left| \begin{array}{cc|c|c}
p_{2} & 0 & -1 & 0 \\
0 & p_{3} & -1 & 0 \\ \hline
-1 & -1 & t & 0 \\ \hline
0 & 0 & 0 & s
\end{array} \right| - \left| \begin{array}{cc|c|c}
p_{2} & -2 & -1 & 0 \\
-2 & p_{3} & -1 & 0 \\ \hline
-1 & -1 & t & 0 \\ \hline
0 & 0 & 0 & s
\end{array} \right| = 4 s (1 + t), 
\end{align*} }where $s = -2q$, $c = -2/q$, $t = -(q + k - 1)^2$, and $p_{1},p_{2},p_{3}$ are the diagonal entries of $P$. The sum of these differences is \begin{equation}
\Delta(i,j,k,1) = 4s = -8q
\end{equation} for fixed $k > 3$, and hence\begin{equation}
\sum_{k>3} \Delta(i,j,k,1) = -(n-3)8q
\end{equation} Similar calculations give us 
\begin{equation}
\sum_{k>3} \Delta(i,j,k,2) = -(n-3)8(q+1),
\end{equation}
\begin{equation}
\sum_{k>3} \Delta(i,j,k,3) = -(n-3)8(q+2).
\end{equation} for $k > 3$. Now, adding each of these to Equations \eqref{eq_diff_1}, \eqref{eq_diff_2}, \eqref{eq_diff_3}, we find that \begin{align*}
\sum \Delta(i,j,k,1) &= -(n-3)8q +\frac{8 \left((n-3) q^3-2\right)}{q^2} = -\frac{16}{q^2}, \\
\sum \Delta(i,j,k,2) &= (n-3)8(q+1) + \frac{8 (q+1) \left((n-3) q^4+4 q+4\right)}{q^4} = \frac{32 \left( q + 1\right)^{2}}{q^4},\\
\sum \Delta(i,j,k,3) &= (n-3)8(q+2) +\frac{8 (q+2) \left((n-3) q^4-2 q-4\right)}{q^4}= -\frac{16 \left( q + 2 \right)^{2}}{q^4}.
\end{align*} Notice how all of the explicit $n$-terms cancel out. These are the only differences we have uncovered which are non-zero. Let us add them up to find the final value of the difference between the fourth symmetric polynomial of eigenvalues of $L_{1}$ and $L_{2}$: \begin{equation*}
\sum\Delta(i,j,k,m) = \frac{32 (q+1)^2}{q^4}-\frac{16 (q+2)^2}{q^4}-\frac{16}{q^2} = -\frac{32}{q^{4}} \neq 0
\end{equation*}

We have shown that the difference of the fourth symmetric polynomial of $L_{1}$ and $L_{2}$ is non-zero, and so therefore $L_{1}$ and $L_{2}$ must have different eigenvalues. Therefore, $L_{(0)}$ has at least some simple eigenvalues. We will now place lower bounds on how many eigenvalues must differ between $L_{1}$ and $L_{2}$, and therefore a lower bound on how many simple eigenvalues $L_{(0)}$ must have.

Suppose that the eigenvalues of $L_{1}$ and $L_{2}$ are the same except for one eigenvalue. Denote the eigenvalues of $L_{1}$ by $$\lbrace \lambda_{1}, \lambda_{2}, \hdots , \lambda_{n} \rbrace, $$ and the eigenvalues of $L_{2}$ by $$\lbrace \lambda_{1} + t, \lambda_{2}, \hdots , \lambda_{n} \rbrace, $$ where $t \neq 0$. But then, the trace of $L_{1}$ cannot equal the trace of $L_{2}$, which we know it must since $e_{1}(L_{1}) = e_{1}(L_{2})$. Therefore $L_{1}$ and $L_{2}$ must differ in at least two eigenvalues, such that the sum of eigenvalues is the same for both: $$L_{1}: \quad \lbrace \lambda_{1}, \lambda_{2}, \lambda_{3}, \hdots \lambda_{n} \rbrace,$$ $$L_{2}: \quad \lbrace \lambda_{1} + t, \lambda_{2} - t, \lambda_{3}, \hdots \lambda_{n} \rbrace,$$ where $t \neq 0$. Now let us evaluate $e_{2}(L_{1})$ and $e_{2}(L_{2})$: \begin{align*}e_{2}(L_{1}) &= \lambda_{1}\lambda_{2} + (\lambda_{1} + \lambda_{2})\sum_{i=3}^{n} \lambda_{i} + \mbox{ shared terms }, \\
e_{2}(L_{2}) &= (\lambda_{1} + t)(\lambda_{2} - t) + (\lambda_{1} + t) \sum_{i=3}^{n} \lambda_{i} + (\lambda_{2} - t) \sum_{i=3}^{n} \lambda_{i}  + \mbox{ shared terms } \\
&= (\lambda_{1} + t)(\lambda_{2} - t) + (\lambda_{1} + \lambda_{2})\sum_{i=3}^{n} \lambda_{i}  + \mbox{ shared terms }.
 \end{align*} We know that $e_{2}(L_{1}) = e_{2}(L_{2})$, and so \begin{equation*}\lambda_{1}\lambda_{2} = (\lambda_{1} + t)(\lambda_{2} - t) = \lambda_{1}\lambda_{2} + t(\lambda_{2} - \lambda_{1}) - t^{2},
 \end{equation*} leading to $$ t(\lambda_{2} - \lambda_{1} - t) = 0. $$ Since $t \neq 0$, we have that $t = \lambda_{2} - \lambda_{1}$. But for this value of $t$, we have that $\lambda_{1} + t = \lambda_{2}$, and $\lambda_{2} + t = \lambda_{1}$, which would mean that the eigenvalues of $L_{2}$ are the same as the eigenvalues of $L_{1}$, which is a contradiction. Therefore $L_{1}$ and $L_{2}$ must differ in at least three eigenvalues. There may be more eigenvalues not in common, but three is a lower bound. Therefore, since the eigenvalues of $L_{(0)}$ are the union of  the eigenvalues of $L_{1}$ and $L_{2}$, we can finally conclude that $L_{(0)}$ has at least six simple eigenvalues.
\end{proof}

Having shown the desired condition of some simple eigenvalues at a specific value of the parameter $b$ for the Kippenhahn counterexample $L_{(b)}$, we now wish to extend this condition to cover as many values of $b$ as possible. We will use results from perturbation theory found in Chapter 2 of \cite{kato} and Chapter 5 of \cite{knopp}. 

\begin{theorem}
$L_{(b)}$ has some simple eigenvalues at almost all $b$.
\end{theorem}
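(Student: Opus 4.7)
The plan is to exploit analytic perturbation theory, bootstrapping from Theorem \ref{L_simple_sym} which guarantees six simple eigenvalues at the single value $b=0$. The key observation is that every entry of $L_{(b)} = X \otimes H_{(b)} + Y \otimes K$ is a polynomial in $b$ of degree at most two (arising from $H_{(b)} = A_{(b)}^{2}$ and the linearity of $A_{(b)}$ in $b$), so $\{L_{(b)}\}_{b \in \mathbb{R}}$ is a real-analytic family of hermitian matrices. Hermiticity holds because $H_{(b)}$ is the square of a real skew-symmetric matrix (hence symmetric), $K$ is symmetric by Lemma \ref{lemma_eigenpairs}, $X$ and $Y$ are hermitian by construction, and Kronecker sums of hermitian matrices are hermitian.

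First I would invoke the Rellich--Kato theorem for hermitian real-analytic families (Chapter 2, \S 6 of \cite{kato}; see also Chapter 5 of \cite{knopp}): there exist globally real-analytic functions $\mu_{1}(b), \dots, \mu_{4n}(b)$ on $\mathbb{R}$ whose values, counted with multiplicity, are exactly the eigenvalues of $L_{(b)}$. By Theorem \ref{L_simple_sym}, we may relabel the branches so that $\mu_{1}(0), \dots, \mu_{6}(0)$ realise the six simple eigenvalues of $L_{(0)}$; in particular these six numbers are pairwise distinct and distinct from every $\mu_{k}(0)$ with $k \geq 7$. For each pair $(j,k)$ with $j \in \{1, \dots, 6\}$ and $k \neq j$, the difference $\mu_{j}(b) - \mu_{k}(b)$ is a real-analytic function of $b$ which does not vanish at $b=0$; hence it is not identically zero, and its zero set is discrete. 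The finite union $\Sigma$ of these discrete zero sets is still discrete (and so has measure zero), and for every $b \in \mathbb{R} \setminus \Sigma$ the branches $\mu_{1}(b), \dots, \mu_{6}(b)$ remain pairwise distinct and distinct from all other branches, yielding at least six simple eigenvalues of $L_{(b)}$.

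The main obstacle is justifying a global real-analytic eigenvalue labelling. For a generic complex-analytic perturbation, eigenvalue branches possess only Puiseux expansions at crossings, and branch points can prevent any consistent global labelling by single-valued analytic functions. What rescues us is hermiticity: Rellich's classical theorem says that in the self-adjoint case the eigenvalue branches can always be chosen globally real-analytic, with crossings of branches occurring only at isolated points. Once this foundational input is imported, the rest of the argument is the routine analytic-zero-set observation above, and the proof concludes that the set of bad parameters $\Sigma$ is discrete and therefore has Lebesgue measure zero in $\mathbb{R}$.
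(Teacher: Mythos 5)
Your proof is correct and takes essentially the same route as the paper: bootstrap the six simple eigenvalues at $b=0$ (Theorem~\ref{L_simple_sym}) through analytic perturbation theory from Chapter~2 of \cite{kato}, concluding that eigenvalue collisions are confined to a discrete, hence measure-zero, set of parameter values. The only cosmetic difference is that you invoke Rellich's theorem for self-adjoint real-analytic families to obtain globally analytic eigenvalue branches and then look at zero sets of their differences, whereas the paper works directly with the characteristic polynomial, whose roots are analytic in $b$ up to algebraic singularities with only countably many exceptional points where distinct eigenvalues can merge; both arguments deliver the same conclusion.
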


\begin{proof}
Consider the characteristic polynomial of $L_{(b)}$: \begin{equation}\label{eqn_kato}
\mbox{Det}(L_{(b)} - \lambda I) = 0. 
\end{equation} As stated on page 63 of \cite{kato}, this is an algebraic equation in $\lambda$ of degree $2n$, the order of $L_{(b)}$, with coefficients which are holomorphic in $b$. It follows (see page 119 of \cite{knopp}, or page 64 of \cite{kato}) that the roots of \eqref{eqn_kato} are branches of analytic functions of $b$ with only algebraic singularities. This implies that the number of distinct eigenvalues of $L_{(b)}$ is a constant independent of $b$ except at some \emph{exceptional points} where the analytic functions representing each root of \eqref{eqn_kato} meet. Since these eigenvalue functions are analytic and susceptible to a power series expansion, there can only be a finite number of such crossing points in any compact interval of $\mathbb{R}$, and only a countable number overall. At an exceptional point, the number of distinct eigenvalues can only decrease, never increase.

We already know that $L_{(0)}$ has at least six simple eigenvalues, and according to the above reasoning these eigenvalues can only collide at a countable number of values of $b$. Therefore $L_{(b)}$ has some simple eigenvalues at almost every $ b \in \mathbb{R}$, that is at all except a measure zero set of values of $b$.
\end{proof} This result establishes Theorem \ref{thm_quant} from the Introduction. We have shown that the counterexample presented in this paper can be quantised at all orders for almost all values of the parameter $b$. The tradeoff is that we have not shown that every eigenvalue is simple for almost all $b$, but only a subset of the eigenvalues. Regardless, this constitutes a quantisation of the Kippenhahn conjecture at all orders.

\section{Quantisation of Li, Spitkovsky and Shukla's counterexample}\label{sec_LSS}

In this section we will give a general description of the process of quantising the counterexample of Li, Spitkovsky, and Shukla (the LSS counterexample) to the more general form of Kippenhahn's Conjecture. First we will establish a preliminary lemma.

\begin{lemma}\label{lemma_aux}
Let $M = \left( \begin{array}{cc}
P & Q \\
Q & 0
\end{array}\right)$  be a matrix over a field $\mathbb{F}$, where $P$ and $Q$ are symmetric and invertible. Then for any eigenvalue $\lambda$ of $M$ of multiplicity $m$, the \emph{auxiliary matrix} $\widetilde{M}(\lambda)$ of $M$, defined as \begin{equation}\label{eqn_aux_matrix}
\widetilde{M}(\lambda) = P + \frac{1}{\lambda} Q^{2}
\end{equation} has $\lambda$ as an eigenvalue.

If $m > 1$, then $\lambda$ must also have multiplicity greater than 1 with respect to $\widetilde{M}(\lambda)$. 
\end{lemma}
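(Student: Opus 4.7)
The plan is to use the standard block-matrix trick: for an eigenvector $(u,v)^T$ of $M$ with eigenvalue $\lambda$, the two block equations $Pu + Qv = \lambda u$ and $Qu = \lambda v$ let us solve for $v$ in terms of $u$ and substitute back, producing an eigenvalue equation for $u$ with respect to $\widetilde{M}(\lambda)$.

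First I would observe that $\lambda \neq 0$ whenever $\lambda$ is an eigenvalue of $M$, so that division by $\lambda$ in \eqref{eqn_aux_matrix} is legitimate. Indeed, if $Mx = 0$ for $x = (u,v)^T$, then the second block equation $Qu = 0$ plus the invertibility of $Q$ forces $u = 0$, and then the first block equation $Qv = 0$ forces $v = 0$; so $M$ is invertible and $0$ is not an eigenvalue. With $\lambda \neq 0$ secured, the second block equation gives $v = \tfrac{1}{\lambda}Qu$, and inserting this into the first yields
\[
\Bigl(P + \tfrac{1}{\lambda}Q^{2}\Bigr)u \;=\; \lambda u,
\]
which is exactly $\widetilde{M}(\lambda)u = \lambda u$. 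It then remains to rule out $u = 0$; but $u = 0$ combined with $v = \tfrac{1}{\lambda}Qu$ forces $v = 0$, contradicting the fact that $(u,v)^T$ was an eigenvector. This proves the first claim.

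For the multiplicity statement, I would package the above as a linear map $\Phi \colon \ker(M - \lambda I) \to \ker(\widetilde{M}(\lambda) - \lambda I)$ sending $(u,v)^T$ to $u$. The computation above shows $\Phi$ is well-defined, and the argument ruling out $u = 0$ shows $\Phi$ is injective (since $v$ is determined by $u$ via $v = Qu/\lambda$, the kernel of $\Phi$ is trivial). Therefore
\[
\dim \ker\bigl(\widetilde{M}(\lambda) - \lambda I\bigr) \;\geq\; \dim \ker(M - \lambda I),
\]
so if $\lambda$ has geometric multiplicity $m > 1$ as an eigenvalue of $M$, it has geometric multiplicity at least $m$ as an eigenvalue of $\widetilde{M}(\lambda)$, hence in particular multiplicity greater than $1$. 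Since $M$ and $\widetilde{M}(\lambda)$ are symmetric, algebraic and geometric multiplicities agree, so the result is stated unambiguously.

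I do not expect any serious obstacle; the only care required is the invertibility observation which guarantees $\lambda \neq 0$ and the injectivity of $\Phi$, both of which hinge on the invertibility of $Q$ that is part of the hypothesis. (The hypothesis on $P$ is not needed for this lemma, only that on $Q$.)
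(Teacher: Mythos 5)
Your proposal is correct and follows essentially the same route as the paper: solve the second block equation for $v = \tfrac{1}{\lambda}Qu$, substitute into the first to obtain $\widetilde{M}(\lambda)u = \lambda u$, and observe that the assignment $(u,v)^T \mapsto u$ on $\lambda$-eigenvectors is injective, which is exactly how the paper treats the multiplicity claim (it phrases this with two linearly independent eigenvectors rather than your map $\Phi$, but the content is identical). Your kernel argument for the invertibility of $M$ simply replaces the paper's determinant identity $\det M = \det(Q)^2 \neq 0$, and your observations that $u \neq 0$ and that the invertibility of $P$ is not needed are sound refinements rather than a different approach.
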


\begin{proof}
Let $\left( \begin{array}{c}
u \\
v
\end{array} \right) $ be an eigenvector of $M$, with eigenvalue $\lambda$: $$M \left( \begin{array}{c}
u \\
v
\end{array} \right) = \left( \begin{array}{c}
P u + Q v \\
Q u
\end{array} \right)=\left( \begin{array}{c}
\lambda u \\
\lambda v
\end{array} \right).$$ Since $\mbox{Det}(M) = \mbox{Det}(Q)^{2} \neq 0 $, we know that $M$ is invertible and $\lambda \neq 0$. Comparing components, we see that $$v = \frac{1}{\lambda} Q u .$$ We then have $$\widetilde{M}(\lambda)u = P u + \frac{1}{\lambda}Q^{2} u = \lambda u.$$ So $u$ is an eigenvector of the auxiliary matrix $\widetilde{M}(\lambda)$ with eigenvalue $\lambda$. Note that $u \neq 0$ because $u$ is an eigenvector. Suppose now that $\lambda$ appeared with multiplicity greater than one as an eigenvalue of $M$. Then there must be some $\lambda$-eigenvector $\left( \begin{array}{c}
u_{2} \\
v_{2}
\end{array} \right) \neq k \left( \begin{array}{c}
u \\
v
\end{array} \right) $ for any $k \in \mathbb{F}$. By the above reasoning, both $u$ and $u_{2}$ are eigenvectors of $\widetilde{M}(\lambda)$ with eigenvalue $\lambda$. Suppose that $u_{2} = k u$. Then $v_{2} = (k/\lambda) Q u $, as above, and so we would have $\left( \begin{array}{c}
u_{2} \\
v_{2}
\end{array} \right) = k \left( \begin{array}{c}
u \\
v
\end{array} \right), $ which is not possible. Therefore we must conclude that $u_{2}$ is not a multiple of $u$. Therefore $\lambda$ must be an eigenvalue of $\widetilde{M}(\lambda)$ of multiplicity at least two, thus proving the result.
\end{proof}
The auxiliary matrix \eqref{eqn_aux_matrix} will be of use in this section, and in Section \ref{sec_laffey} where we quantise Laffey's seminal counterexample \cite{laffey}. 

In their paper \cite{li}, Li, Spitkovsky, and Shukla described the following $6 \times 6$ counterexample to the weak form of Kippenhahn's conjecture, where the characteristic polynomial need not be square. Take a matrix $A$ defined by $$A = {\small \left(
\begin{array}{cccccc}
 0 & x & 0 & c y & 0 & 0 \\
 0 & 0 & y & 0 & 0 & 0 \\
 0 & 0 & 0 & 0 & 0 & 0 \\
 0 & 0 & -c x & 0 & \sqrt{1-c^2} \xi  & 0 \\
 0 & 0 & 0 & 0 & 0 & \eta  \\
 0 & 0 & 0 & 0 & 0 & 0 \\
\end{array}
\right), } $$ where $x,$ $y,$ $\xi,$ $\eta ,$ $c > 0,$ and $x^2 + y^2 = \xi^2 + \eta^2 = 1 ,$ $c < 1/2$. Then define $H = A + A^T $ and $K$ via $2A = H + i K$: $$H = {\footnotesize \left(
\begin{array}{cccccc}
 0 & x & 0 & c y & 0 & 0 \\
 x & 0 & y & 0 & 0 & 0 \\
 0 & y & 0 & -c x & 0 & 0 \\
 c y & 0 & -c x & 0 & \sqrt{1-c^2} \xi  & 0 \\
 0 & 0 & 0 & \sqrt{1-c^2} \xi  & 0 & \eta  \\
 0 & 0 & 0 & 0 & \eta  & 0 \\
\end{array}
\right), } $$ $${\footnotesize K = \left(
\begin{array}{cccccc}
 0 & -i x & 0 & -i c y & 0 & 0 \\
 i x & 0 & -i y & 0 & 0 & 0 \\
 0 & i y & 0 & -i c x & 0 & 0 \\
 i c y & 0 & i c x & 0 & -i \sqrt{1-c^2} \xi  & 0 \\
 0 & 0 & 0 & i \sqrt{1-c^2} \xi  & 0 & -i \eta  \\
 0 & 0 & 0 & 0 & i \eta  & 0 \\
\end{array}
\right). } $$ We will now quantise this counterexample.  Define hermitian $X = \left( \begin{array}{cc}
0 & 1 \\
1 & 0
\end{array} \right)$ and $Y = \left( \begin{array}{cc}
0 & i \\
-i & 0
\end{array} \right),$ and consider $L(s) = s X \otimes H + Y \otimes K $, where $s \in \mathbb{R}, s > 1. $ We find that $L(s) = \left( \begin{array}{cc}
0 &M(s) \\
M(s)^{T} & 0
\end{array} \right), $ where $M(s) =$ $$ {\footnotesize  \left(
\begin{array}{cccccc}
 0 & (s+1) x & 0 & c (s+1) y & 0 & 0 \\
 (s-1) x & 0 & (s+1) y & 0 & 0 & 0 \\
 0 & (s-1) y & 0 & -c (s-1) x & 0 & 0 \\
 c (s-1) y & 0 & -c (s+1) x & 0 & \sqrt{1-c^2} \xi  (s+1) & 0 \\
 0 & 0 & 0 & \sqrt{1-c^2} \xi  (s-1) & 0 & \eta  (s+1) \\
 0 & 0 & 0 & 0 & \eta  (s-1) & 0 \\
\end{array}
\right).}$$ The eigenvalues of $L(s)$ will be plus/minus the singular values of $M(s)$, but moreover will be plus/minus the square root of the eigenvalues of $M(s)M(s)^{T}$. So by showing that values exist of $s$ for which $M(s)M(s)^{T}$ has simple eigenvalues, we will show the same for $L(s)$. Firstly, $\mbox{Det}(M(s)) = -c^2 \eta ^2 (s-1)^3 (s+1)^3 \left(x^2+y^2\right)^2$, which for the allowed values of $c, \eta,$ and $s$ is non-zero. 

Our approach is to evaluate the discriminant of the characteristic polynomial of $M(s)M(s)^{T}$ for $s = 2, 3 ,4$, and show that for any values of the parameters of the counterexample, at least one of these values of $s$ gives a non-zero discriminant. Therefore there is always some $sX$ and $Y$ for which $MM^{T}$ and thus $L$ have a non-zero discriminant, and therefore only simple eigenvalues. The process involves lengthy calculation of characteristic polynomials and their discriminants, done using Mathematica, and a Gr{\"o}bner basis calculation, done using Macaulay2. The code for these calculations may be be found at \texttt{github.com/blaw381/Quantum-Kippenhahn-Examples}.

\section{Quantisation of Laffey's counterexample}\label{sec_laffey}

Here is the counterexample given by Laffey in \cite{laffey}: \begin{equation*} 
\begin{split} H &=  {\small \left( \begin{array}{cccccccc}
-122 & 0 & 12 & 18 & -30 & 18 & 26 & 10 \\
0 & -122 & -6 & -12 & -16 & -28 & 20 & -16 \\
12 & -6  & -218 & 0 & 44 & 8 & 24 & 12 \\
18 & -12 & 0 & -218 & -2 & -34 & -10 & 22 \\
-30 & -16 & 44 & -2 & -216 & 0 & -12 & -8 \\
18 & -28 & 8 & -34 & 0 & -216 & -8 & 36 \\
26 & 20 & 24 & -10 & -12 & -8 & -120 & 0 \\
10 & -16 & 12 & 22 & -8 & 36 & 0 & -120 
\end{array} \right)} ,  \\  &\qquad \qquad \; K ={\small \left( \begin{array}{cccccccc}
-4 & 0 & 0 & 0 & 0 & 0 & 0 & 0 \\
0 & -4 & 0 & 0 & 0 & 0 & 0 & 0 \\
0 & 0 & 4 & 0 & 0 & 0 & 0 & 0 \\
0 & 0 & 0 & 4 & 0 & 0 & 0 & 0 \\
0 & 0 & 0 & 0 & -8 & 0 & 0 & 0 \\
0 & 0 & 0 & 0 & 0 & -8 & 0 & 0 \\
0 & 0 & 0 & 0 & 0 & 0 & 8 & 0 \\
0 & 0 & 0 & 0 & 0 & 0 & 0 & 8
\end{array} \right)}. \end{split} \end{equation*} Quantise with $X = \left( \begin{array}{cc}
1 & 0 \\
0 & 0
\end{array} \right)$ and $Y = \left( \begin{array}{cc}
0 & 1 \\
1 & 0
\end{array} \right) $ to get $L = X \otimes H + Y \otimes K. $

\begin{prop}
$L$ has only simple eigenvalues.
\end{prop}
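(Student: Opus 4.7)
The plan is to exploit the block form $L=X\otimes H + Y\otimes K = \begin{pmatrix}H & K\\ K & 0\end{pmatrix}$, reduce the $16\times 16$ characteristic polynomial to an $8\times 8$ determinantal expression via a Schur complement, and then verify distinctness of roots by a discriminant computation. Note that $L$ is of exactly the type considered in Lemma \ref{lemma_aux}, with $P=H$ symmetric and $Q=K$ symmetric and invertible (the diagonal entries of $K$ are $\pm4$ and $\pm8$, all nonzero). So, once we have a repeated root of the characteristic polynomial, we already know it propagates to the auxiliary matrix $\widetilde L(\lambda) = H + \lambda^{-1}K^{2}$; but in fact we can work with $\widetilde L(\lambda)$ directly to set up the reduction.

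The first step is to apply the block-determinant identity. Since $-K$ commutes with $\lambda I_{8}$, one has
\[
\det(\lambda I_{16}-L) \;=\; \det\!\begin{pmatrix}\lambda I_{8}-H & -K \\ -K & \lambda I_{8}\end{pmatrix} \;=\; \det\!\bigl(\lambda^{2}I_{8} - \lambda H - K^{2}\bigr) \;=:\; f(\lambda),
\]
a polynomial of degree $16$ in $\lambda$ with integer coefficients (equivalently $f(\lambda)=\lambda^{8}\det(H-\lambda I+\lambda^{-1}K^{2})$ away from $\lambda=0$). The second step is to substitute Laffey's explicit $H$ and $K$ and expand this $8\times 8$ determinant whose entries are quadratic polynomials in $\lambda$, obtaining $f$ as an explicit integer polynomial in $\lambda$.

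The third step is to verify that $f$ has $16$ distinct roots. The cleanest certificate is to compute the discriminant $\operatorname{disc}(f)\in\mathbb{Z}$, or equivalently to show $\gcd(f,f')=1$ in $\mathbb{Q}[\lambda]$; either check can be performed exactly in Mathematica. A nonzero value of the discriminant shows that $L$ has $16$ distinct real eigenvalues (real because $L$ is Hermitian), completing the proof.

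The only obstacle is the sheer size of the symbolic computation: $f$ is a degree-$16$ polynomial with large integer coefficients, and its discriminant is correspondingly enormous. Nevertheless, because all entries of $H$ and $K$ are small integers, the computation is finite and entirely routine for a computer algebra system, in the same spirit as the discriminant verification performed for the LSS counterexample in Section \ref{sec_LSS}. No conceptual difficulty arises; the proof is a finite, exact numerical check.
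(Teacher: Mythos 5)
Your proposal is correct, but it takes a genuinely different route from the paper. You reduce $\det(\lambda I_{16}-L)$ via the block-determinant identity (Lemma \ref{lemma_block_det}, valid here since $-K$ commutes with $\lambda I_8$) to the explicit degree-$16$ integer polynomial $f(\lambda)=\det(\lambda^{2}I_{8}-\lambda H-K^{2})$, and then certify simplicity of all eigenvalues by a single squarefreeness check, $\operatorname{disc}(f)\neq 0$ or $\gcd(f,f')=1$; note that once you do this, Lemma \ref{lemma_aux} and the auxiliary matrix play no role in your argument, so you can drop them. The paper instead uses the block identity only to see $\det L=\det(-K^{2})\neq 0$, passes to the auxiliary matrix $\widetilde L(\lambda)=H+\lambda^{-1}K^{2}$ via Lemma \ref{lemma_aux}, and invokes Lax's theorem, so that a repeated eigenvalue would force a nonzero skew-symmetric $S$ commuting with $\lambda\widetilde L(\lambda)$; the verification then becomes a rank computation for a linear system in the $28$ entries $s_{ij}$ with coefficients linear in $\lambda$, which is computationally much lighter than expanding a degree-$16$ characteristic polynomial and its discriminant, and gives a certificate uniform in $\lambda$. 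What your approach buys is directness and a self-contained, unconditional integer certificate (and $\gcd(f,f')$, or reducing the discriminant modulo one prime, keeps the arithmetic manageable); what it costs is that the certificate is a large opaque number rather than the structural commutant argument, and like the paper you are ultimately deferring to an exact computer-algebra verification, so you should state the computed certificate (or point to code) just as the paper does.
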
  

\begin{proof}

Observe that $L = \left( \begin{array}{cc}
H & K \\
K & 0
\end{array} \right) $, and notice that the lower blocks trivially commute. We can therefore use Lemma \ref{lemma_block_det} and conclude that $\mbox{Det}(L) = \mbox{Det}(-K^{2}) \neq 0.$ Therefore all of the eigenvalues of $L$ are non-zero, and we can construct the auxiliary matrix $$\widetilde{L}(\lambda) = H + \frac{1}{\lambda}K^{2}, $$ corresponding to some eigenvalue $\lambda$ of $L$. Note that for the purpose of this auxiliary matrix, $\lambda$ is a fixed constant. By Lemma \ref{lemma_aux}, we know that $\lambda$ is an eigenvalue of $\widetilde{L}(\lambda)$, and our aim is to determine the multiplicity of $\lambda$ with respect to $\widetilde{L}(\lambda)$ . Note Lax's theorem \cite{lax}, that a symmetric matrix has repeated eigenvalues if and only if it commutes with a non-zero skew-symmetric matrix. We can take the commutator of $\lambda \widetilde{L}(\lambda)$ with a skew symmetric matrix $S = \left\lbrace s_{ij} \right\rbrace$. We have multiplied by $\lambda$ to improve the clarity of the resulting commutator. This commutator has 36 distinct elements, a sample of which is listed here: 
 {\small \begin{align*}
 &(1) \qquad -4 \lambda  (6 s_{1,3}+9 s_{1,4}-15 s_{1,5}+9 s_{1,6}+13 s_{1,7}+5 s_{1,8}), \\
&(2) \qquad 2 (8 \lambda  s_{1,2}-22 \lambda  s_{1,3}+\lambda  s_{1,4}+47 \lambda  s_{1,5}+6 \lambda  s_{1,7}+4 \lambda  s_{1,8} \\ 
 &\qquad \qquad +6 \lambda  s_{3,5} +9 \lambda  s_{4,5}-9 \lambda  s_{5,6}-13 \lambda  s_{5,7}-5 \lambda  s_{5,8}-24 s_{1,5}), \\
&(3) \qquad 2 \lambda  (9 s_{1,2}+48 s_{2,4}+s_{2,5}+17 s_{2,6}+5 s_{2,7}-11 s_{2,8}-3 s_{3,4}+8 s_{4,5}+14 s_{4,6}-10 s_{4,7}+8 s_{4,8}), \\
& (4) \qquad 2 (9 \lambda  s_{1,2}-4 \lambda  s_{2,3}+17 \lambda  s_{2,4}+47 \lambda  s_{2,6}+4 \lambda  s_{2,7}-18 \lambda  s_{2,8} \\
 & \qquad  \qquad -3 \lambda  s_{3,6}-6 \lambda  s_{4,6}-8 \lambda  s_{5,6}-10 \lambda  s_{6,7}+8 \lambda  s_{6,8}-24 s_{2,6}), \\
& (5) \qquad 4 \lambda  (3 s_{1,2}+24 s_{2,3}-11 s_{2,5}-2 s_{2,6}-6 s_{2,7}-3 s_{2,8}+3 s_{3,4}+4 s_{3,5}+7 s_{3,6}-5 s_{3,7}+4 s_{3,8}), \\
& (6) \qquad 2 (5 \lambda  s_{1,4}+9 \lambda  s_{1,8}-8 \lambda  s_{2,4}-6 \lambda  s_{2,8}+6 \lambda  s_{3,4}+4 \lambda  s_{4,5} \\ 
&\qquad  \qquad -18 \lambda  s_{4,6}-49 \lambda  s_{4,8}-\lambda  s_{5,8}-17 \lambda  s_{6,8}-5 \lambda  s_{7,8}-24 s_{4,8}).
\end{align*} }
See \texttt{github.com/blaw381/Quantum-Kippenhahn-Examples} for the relevant code. We will suppose that each of these elements vanishes, giving us 36 equations, and solve for possible values of the $s_{i,j}$. Since $\lambda$ is a non-zero constant in the context of the auxiliary matrix, and each of the these polynomials is linear in the $s_{i,j}$, solving for the $s_{i,j}$ using linear algebra is immediately possible.  It is easy to confirm using any computer algebra system that this system of equations has full rank with respect to the $s_{i,j}$, and so we must conclude that $ s_{i,j} = 0$ for all $i,j$. Therefore $\lambda \widetilde{L}(\lambda)$, and hence also $\widetilde{L}(\lambda)$, must have only simple eigenvalues. In particular $\lambda$ must be a simple eigenvalue of $\widetilde{L}(\lambda)$ and hence by Lemma \ref{lemma_aux} also a simple eigenvalue of $L$. Since we have made no special assumptions about $\lambda$, we conclude that $L$ must have only simple eigenvalues.
\end{proof}

 \section{Final remarks on quantisation}
While Theorem \ref{thm_quant_kipp_intro} fixes the disproven Kippenhahn conjecture, the obtained quantities and thus the bounds on the size $n$ of $X$ and $Y$ needed are exponential in the size $d$ of $A$ and $B$. These bounds are likely far from optimal, a view suggested by the small size of the $X$ and $Y$ which we have found for the Kippenhahn conjecture counterexamples. There is a natural matrix convex set called a free spectrahedron associated to the matrices $A$ and $B$ in the counterexample family, consisting of a formal union of symmetric matrices $X$ and $Y$ of every size $n \in \mathbb{N}$ for which $L(X,Y)$ is positive semidefinite. The geometry of the boundary of this region is determined on the $n = d$ level; in fact, every boundary point can be naturally compressed to a $d \times d$ boundary point (cf also \cite[Section~3]{matricial_relaxation} or \cite{spectrahedral_containment}). We are thus led to consider that $n = d$ should suffice as a bound for the Quantum Kippenhahn Theorem. On the other hand, a recent result of Huber and Netzer \cite{polytopes} in a similar situation suggests that $n = 2$ may be sufficient in all cases. It would be interesting to know how much the existing bounds can be tightened, and if the desired property emerges at the first level of non-commutativity for $X$ and $Y$, that is for $n = 2$.  Alternatively, it would also be interesting to find matrices $A$ and $B$ which generate $M_{d}(\mathbb{C})$, such that for generic $2 \times 2 $ hermitian $X$ and $Y$, $\mbox{Det}(I + A \otimes X + B \otimes Y)$ is a square. Preliminary numerical investigation suggests that almost any $2 \times 2$ hermitian $X$ and $Y$ without common eiegenspaces seems to break the square determinant property, when applied to an $8 \times 8$ Kippenhahn counterexample of the type generated by Theorem \ref{thm_construct_simple}. We therefore conjecture that the smallest such $A$ and $B$ have to be of size at least 10. 


\end{document}